\def\co{\colon\thinspace}
\newcommand{\Diff}{\mbox{\rm Diff}}
\newcommand{\Cont}{\mbox{\rm Cont}}
\newcommand{\Aut}{\mbox{\rm Aut}}
\newcommand{\id}{\mbox{\rm id}}
\newcommand{\SO}{\mbox{\rm SO}}
\newcommand{\OO}{\mbox{\rm O}}
\newcommand{\GL}{\mbox{\rm GL}}
\newcommand{\D}{\mathcal{D}}
\newcommand{\N}{\mathbb{N}}
\newcommand{\R}{\mathbb{R}}
\newcommand{\Z}{\mathbb{Z}}
\newcommand{\bfx}{\mathbf{x}}
\newcommand{\tb}{{\tt tb}}
\newcommand{\oK}{\overline{K}}
\newcommand{\ophi}{\overline{\phi}}
\newcommand{\rot}{{\tt rot}}
\newcommand{\xist}{\xi_{\mathrm{st}}}
\newcommand{\rc}{r_{\mathrm{c}}}
\newtheorem{thm}{Theorem}
\newtheorem{lem}[thm]{Lemma}
\newtheorem{prop}[thm]{Proposition}
\theoremstyle{definition}
\newtheorem*{rem}{Remark}
\newtheorem*{rems}{Remarks}
\newtheorem*{ack}{Acknowledgements}
\begin{document}

\title[The diffeotopy group of $S^1\times S^2$]{The diffeotopy group
of $S^1\times S^2$\\via contact topology}

\author{Fan Ding}
\address{Department of Mathematics, Peking University,
Beijing 100871, P.~R. China}
\email{dingfan@math.pku.edu.cn}
\author{Hansj\"org Geiges}
\address{Mathematisches Institut, Universit\"at zu K\"oln,
Weyertal 86--90, 50931 K\"oln, Germany}
\email{geiges@math.uni-koeln.de}

\date{9 April 2009}

\begin{abstract}
As shown by H. Gluck in 1962, the diffeotopy group of
$S^1\times S^2$ is isomorphic to $\Z_2\oplus\Z_2\oplus\Z_2$.
Here an alternative proof of this result is given, relying
on contact topology. We then discuss two applications to
contact topology: (i) it is shown that the fundamental group of the space of
contact structures on $S^1\times S^2$, based at the standard
tight contact structure, is isomorphic to~$\Z$;
(ii) inspired by previous work of M.~Fraser,
an example is given of an integer family 
of Legendrian knots in $S^1\times S^2\# S^1\times S^2$
(with its standard tight contact structure) that can be distinguished
with the help of contact surgery, but not by the classical invariants
(topological knot type, Thurston--Bennequin invariant, and rotation number).
\end{abstract}

\subjclass[2000]{Primary 57R50, 57R52; Secondary 53D10, 57M25}

\maketitle

\section{Introduction}
The {\em diffeotopy group} $\D (M)$ of a smooth manifold $M$ is the quotient
of the diffeomorphism group $\Diff (M)$ by its normal subgroup
$\Diff_0(M)$ of diffeomorphisms isotopic to the identity. Alternatively,
one may think of the diffeotopy group as the group $\pi_0(\Diff (M))$
of path components of $\Diff (M)$, since any continuous path
in $\Diff (M)$ can be approximated by a smooth one, i.e.\ an isotopy.
We use this terminology to emphasise that we work in the differentiable
category throughout. In the topological realm, with diffeomorphisms
replaced by homeomorphisms, one speaks of the {\em homeotopy group}.
In either situation, the more popular term is {\em mapping class group},
sometimes with the attribute `extended' in order to indicate that
orientation-reversing maps are allowed.

Quite a bit is known about the diffeotopy groups of $3$-manifolds.
The theorem of Cerf~\cite{cerf68} says that $\D (S^3)=\Z_2$.
The diffeotopy groups of lens spaces were computed independently
by Bonahon~\cite{bona83} and Hodgson--Rubinstein~\cite{horu85}. For
other known results, open questions, and an extensive bibliography,
see Kirby's problem list~\cite{kirb97}, especially Problems 3.34 to 3.36.

The diffeotopy group of $S^1\times S^2$ was determined by
Gluck~\cite{gluc62}. He showed that $\D (S^1\times S^2)\cong
\Z_2\oplus \Z_2\oplus \Z_2$. (Actually, Gluck dealt with
the homeotopy group, but in dimension $3$ this amounts to the same;
see the discussion in Section 5.8 of~\cite{horu85}.)
Our aim in this note is to derive that result by contact topological
means. The main ingredients are the classification
of contact structures on $S^1\times S^2$ up to isotopy, a result of
Colin about isotopies of $2$-spheres in contact $3$-manifolds, and
a theorem of Giroux concerning the space of contact elements on $\R^2$
and its contactomorphism group. This may indicate to what extent one might
hope to generalise our method to other $3$-manifolds.

The key point in the determination of the diffeotopy group of
$S^1\times S^2$ is to show that any diffeomorphism acting trivially
on homology is isotopic to either the identity or a diffeomorphism
$r$ of order~$2$ (up to isotopy) that will be described in the next section.
This argument will take up Sections \ref{section:contacto}
to~\ref{section:reduction}.

We then put this result to use in contact topology.
In Section~\ref{section:space} we show that
the fundamental group of the space of
contact structures on $S^1\times S^2$, based at the standard
tight contact structure~$\xist$, is isomorphic to~$\Z$. This follows essentially
from the observation that the mentioned diffeomorphism $r$
is isotopic to a contactomorphism $\rc$ of infinite
order in the contactomorphism group (as was noticed previously
by Gompf~\cite{gomp98}).

In Section~\ref{section:knots} we give an explicit description of
an infinite family of homologically trivial
Legendrian knots in $(S^1\times S^2\#
S^1\times S^2,\xist\#\xist )$, all of which have the same
topological knot type, Thurston--Bennequin
invariant, and rotation number, but which are pairwise not Legendrian
isotopic. This family has previously been described by Fraser, albeit
in an implicit fashion only. Moreover, we shall
explain why we regard her argument as incomplete.
\section{The diffeotopy group of $S^1\times S^2$}
Given a manifold ~$M$, write $\Aut_i(M)$ for the group of automorphisms of
the homology group $H_i(M)$. We consider the homomorphism
\[ \begin{array}{rccrcl}
\Phi\co & \D (S^1\times S^2) & \longrightarrow & \Aut_1(S^1\times S^2) &
                 \oplus & \Aut_2(S^1\times S^2) \\
        & [f]                & \longmapsto     & (f_*|_{H_1}           &
                 ,      & f_*|_{H_2}).
\end{array} \]
Since $H_i(S^1\times S^2)\cong\Z$ for $i=1,2$, this gives
a homomorphism $\Phi\co\D (S^1\times S^2)\rightarrow\Z_2\oplus\Z_2$.
For the interpretation of $\Z_2$ as the automorphism group of $\Z$ it is
convenient to write $\Z_2$ multiplicatively with elements $\pm 1$.
In order to study the properties of~$\Phi$, we introduce the
following diffeomorphisms.

Write $r_{\theta}$ for the rotation of $S^2\subset\R^3$ about the $x_3$-axis
through an angle~$\theta$. We think of $S^1$ as $\R /2\pi\Z$.
Define diffeomorphisms $s,a,r$ of $S^1\times S^2$ by
\begin{eqnarray*}
s(\theta ,\bfx ) & = & (-\theta ,\bfx ),\\
a(\theta ,\bfx ) & = & (\theta ,-\bfx ),\\
r(\theta ,\bfx ) & = & (\theta ,r_{\theta}(\bfx )).
\end{eqnarray*}
Then $\Phi ([s])=(-1,1)$, $\Phi ([a])=(1,-1)$, and $\Phi ([r])=(1,1)$.
So $\Phi$ is surjective, and  --- since $s$ and $a$ commute with each
other --- a splitting of $\Phi$ can be
defined by sending $(1,1)$ to $[\id_{S^1\times S^2}]$, the element
$(-1,1)$ to~$[s]$, and $(1,-1)$ to~$[a]$. We therefore have a split short
exact sequence
\[ \ker\Phi\rightarrowtail\D (S^1\times S^2)\twoheadrightarrow
\Z_2\oplus\Z_2.\]

\begin{lem}
The class $[r]$ has order $2$ in $\D (S^1\times S^2)$.
\end{lem}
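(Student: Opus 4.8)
The plan is to establish the two halves ``$[r]^2=[\id]$'' and ``$[r]\neq[\id]$'' separately. For the first, I would note that $r^2(\theta,\bfx)=(\theta,r_{2\theta}(\bfx))$, so $r^2$ is the twist diffeomorphism attached to the loop $\gamma\co\R/2\pi\Z\to\SO(3)$, $\theta\mapsto r_{2\theta}$. This loop traverses twice a generator of $\pi_1(\SO(3))\cong\Z_2$, hence is null-homotopic; choosing a homotopy $(\gamma_t)_{t\in[0,1]}$ through loops in $\SO(3)$ from $\gamma_0=\gamma$ to the constant loop $\gamma_1\equiv\id_{S^2}$ (possible since $\SO(3)$ is connected, so any free null-homotopy can be deformed to end at $\id$), the formula $f_t(\theta,\bfx)=(\theta,\gamma_t(\theta)(\bfx))$ defines an isotopy from $r^2$ to $\id_{S^1\times S^2}$. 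The only point to check here is that each $\gamma_t$ really is a loop, which holds by construction.

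For $[r]\neq[\id]$ --- the substantive part --- I would argue via mapping tori. If $r$ were isotopic to the identity, its mapping torus $M_r=(S^1\times S^2)\times[0,1]/(z,0)\sim(r(z),1)$ would be diffeomorphic to $M_{\id}=S^1\times S^2\times S^1$, which is spin; so it suffices to show that $M_r$ is \emph{not} spin. Since $r$ covers the identity on the $S^1$-factor, $M_r$ is the total space of an $S^2$-bundle over the $2$-torus $T^2$ (base $=$ the $S^1$ of $S^1\times S^2$ times the mapping-torus circle), with monodromy $r_\theta$ over the point $\theta$. As every $r_\theta$ fixes the north pole $\mathbf{n}\in S^2$, the circle $S^1\times\{\mathbf{n}\}$ is fixed pointwise by $r$ and sweeps out a section $N\cong T^2$ of this bundle, along which $TM_r$ splits as $TN\oplus\nu$ with $\nu$ the normal bundle of $N$. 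Now $\nu$ is the mapping torus of the automorphism family $\theta\mapsto D(r_\theta)|_{\mathbf{n}}$ of the trivial plane bundle $T_{\mathbf{n}}S^2\times S^1$, and $D(r_\theta)|_{\mathbf{n}}$ is rotation through $\theta$; this identifies $\nu$ with the oriented plane bundle over $T^2$ that is trivial over each coordinate circle and has Euler number $\pm1$. Hence $w_2(\nu)\neq0$, and since $TN$ is trivial, $w_2(M_r)$ restricts nontrivially to $N$, so $w_2(M_r)\neq0$ and $M_r$ is not spin --- the desired contradiction.

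The step needing real care is the Euler-class computation for $\nu$: one must see that this mapping-torus plane bundle has \emph{odd} Euler number (indeed $\pm1$), not merely a nonzero one. I would do this by the standard clutching argument: $\nu$ is trivial over the $1$-skeleton $S^1\vee S^1$ of $T^2$, so its Euler number equals the winding number of $\theta\mapsto D(r_\theta)|_{\mathbf{n}}$ in $\SO(2)$, which is $1$; alternatively one recognises the unit circle bundle of $\nu$ as the Heisenberg nilmanifold, a circle bundle of Euler number $\pm1$ over $T^2$. If one prefers to avoid Stiefel--Whitney classes altogether, the same contradiction follows by comparing intersection forms: $[N]$ has odd self-intersection in $H_2(M_r)$, while in $H_2(S^1\times S^2\times S^1)$ every class meeting the $S^2$-fibre once has even self-intersection.
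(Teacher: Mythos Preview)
Your argument is correct. The first half---that $[r]^2=[\id]$---is essentially the paper's own: both observe $r^2(\theta,\bfx)=(\theta,r_{2\theta}(\bfx))$ and use $\pi_1(\SO_3)=\Z_2$ to null-homotope the twist. For $[r]\neq[\id]$, however, you take a genuinely different route. The paper stays in three dimensions: it fixes a global trivialisation of $T(S^1\times S^2)$ and assigns to each orientation-preserving diffeomorphism $f$ the class $[Tf|_{S^1\times\{(0,0,1)\}}]\in\pi_1(\GL_3^+)=\Z_2$, which is an isotopy invariant; since $r$ fixes that circle and $Tr$ along it is the loop $\theta\mapsto 1\oplus r_\theta$, the generator of $\pi_1(\SO_3)$, one reads off the nontrivial class directly. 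Your mapping-torus argument is a four-dimensional repackaging of the same obstruction---the nontriviality of that loop in $\pi_1(\SO_3)$ is exactly what forces $e(\nu)$ to be odd and hence $w_2(M_r)\neq0$. The paper's version is shorter and needs no characteristic classes; yours has the virtue of producing a manifestly trivialisation-free invariant (spinnability of the mapping torus, or your alternative intersection-form comparison), at the cost of passing to dimension four.
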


\begin{proof}
The fact that the order of $[r]$ is at most $2$ follows
from $r^2(\theta ,\bfx )=(\theta ,r_{2\theta}(\bfx ))$ and
$\pi_1(\SO_3)=\Z_2$. Actually, this shows that $r^2$ is isotopic to
the identity via an isotopy preserving the $S^2$-leaves
in the product foliation of $S^1\times S^2$.

In order to show that $r$ is not isotopic to the identity, we choose
a trivialisation of the tangent bundle $T(S^1\times S^2)$ by
an oriented frame.
We may assume that along $S^1\equiv S^1\times\{ (0,0,1)\}$ this frame is
$\partial_{\theta},\partial_{x_1},\partial_{x_2}$. Then any
orientation-preserving diffeomorphism $f$ of $S^1\times S^2$ induces an
element $[Tf|_{S^1}]\in\pi_1(\GL_3^+)$.
Isotopic diffeomorphisms induce the same element. The identity
on $S^1\times S^2$ induces the trivial element; the diffeomorphism $r$,
the non-trivial element in $\pi_1(\GL_3^+)=\Z_2$.
\end{proof}

Our main goal will be to prove the following statement.

\begin{prop}
\label{prop:kernel}
The subgroup $\ker\Phi\subset\D (S^1\times S^2)$ is generated by $[r]$,
and hence isomorphic to~$\Z_2$. In other words, any diffeomorphism
of $S^1\times S^2$ acting trivially on homology is isotopic to
either {\rm id} or~$r$.
\end{prop}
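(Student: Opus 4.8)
The plan is to start from an arbitrary diffeomorphism $f$ with $\Phi([f]) = (1,1)$ and reduce it, up to isotopy, to a standard form supported near the product foliation, using contact topology as the main tool. The first step is to realise the standard tight contact structure $\xist$ on $S^1 \times S^2$ and push $f$ to a contactomorphism: after an isotopy we may assume $f^*\xist = \xist$, since any orientation-preserving diffeomorphism acting trivially on $H_2$ preserves the homotopy class of $\xist$ as a plane field, and by the classification of contact structures on $S^1 \times S^2$ up to isotopy (Eliashberg) the pulled-back contact structure $f^*\xist$ is isotopic to $\xist$; composing with that isotopy makes $f$ a contactomorphism of $(S^1 \times S^2, \xist)$.

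Next I would normalise $f$ along a convex sphere. The standard tight contact structure on $S^1 \times S^2$ contains a natural $S^2$-family of Legendrian great circles, or more usefully a convex sphere $S = \{\theta_0\} \times S^2$ with a single dividing curve. Since $f$ acts trivially on $H_2$, the image $f(S)$ is a convex sphere isotopic to $S$, and by Colin's theorem on isotopies of $2$-spheres in tight contact $3$-manifolds this isotopy can be taken through convex spheres (or at least arranged so that after a contact isotopy $f(S) = S$). Cutting along $S$ turns $(S^1 \times S^2, \xist)$ into a tight contact structure on $S^2 \times [0,1]$, i.e.\ a standard neighbourhood of a convex sphere; $f$ now restricts to a contactomorphism of this piece fixing the boundary spheres setwise. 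The key point is that a contactomorphism of $S^1 \times S^2$ preserving $\xist$ and fixing the splitting sphere is, up to contact isotopy, determined by its behaviour near $S$, i.e.\ by the induced contactomorphism of the space of contact elements along $S$.

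The heart of the argument is then Giroux's description of the contactomorphism group: the relevant model is the unit cotangent bundle of $\R^2$ (the space of contact elements), whose contactomorphism group, in the appropriate relative sense, has the homotopy type controlled by $\SO(2)$ (rotations), and consequently the only non-trivial contact mapping class arising this way is the one generated by the half-rotation $r$ — this is exactly the diffeomorphism $r_\theta$ rotating the sphere as $\theta$ goes around $S^1$. So after subtracting off a suitable power of $[r]$, the contactomorphism $f$ becomes contact isotopic to the identity; since $[r]$ has order $2$ by the Lemma, we conclude that $[f] \in \{[\id], [r]\}$, proving that $\ker \Phi = \langle [r] \rangle \cong \Z_2$ (the order is exactly $2$, not $1$, again by the Lemma).

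The main obstacle I expect is the bookkeeping in the reduction step: getting from "$f$ is a contactomorphism fixing a convex sphere" to "$f$ is controlled by the germ of a contactomorphism along that sphere" requires a uniqueness statement for tight contact structures on $S^2 \times [0,1]$ rel boundary together with a parametrised (isotopy-level) version, and one must check that the ambient isotopies produced by Colin's theorem and by the contact-structure classification can be chosen to interact correctly — in particular that they do not secretly reintroduce a class detected by the $\pi_1(\GL_3^+)$-invariant used in the Lemma. Handling the framing/rotation $\pi_1(\SO_3)$ subtlety carefully, so that $r$ (and not some other candidate) is exactly the generator that survives, is where the contact-topological input from Giroux does the real work.
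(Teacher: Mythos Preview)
Your outline matches the paper's proof through the first two steps: isotoping $f$ to a contactomorphism of $(S^1\times S^2,\xist)$ and then using Colin's theorem to make $f$ fix a sphere $S_0=\{0\}\times S^2$. The divergence, and the genuine gap, comes immediately after.

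You propose to cut along $S_0$, obtaining a contactomorphism of the tight $S^2\times [0,1]$, and then invoke Giroux's theorem on the contactomorphism group of the space of contact elements of~$\R^2$. But $S^2\times [0,1]$ is \emph{not} the space of contact elements of~$\R^2$ (the latter is $\R^2\times S^1$), and Giroux's result says nothing about contactomorphisms of a tight ball or thickened sphere. So there is no bridge from your cut model to Giroux's theorem. Relatedly, your mechanism for ``subtracting off a suitable power of~$[r]$'' is unmotivated: nothing in the $S^2\times[0,1]$ picture produces a $\Z$'s worth of contact mapping classes indexed by powers of~$r_c$.

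What the paper does instead is fix, after the sphere, a Legendrian circle $K_0$ representing the positive generator of $H_1(S^1\times S^2)$. The light bulb trick shows that $f(K_0)$ is Legendrian isotopic to some stabilisation $S_{\pm}^nK_0$; since $r_c$ acts on $K_0$ by a single positive stabilisation, composing with $r_c^{\mp n}$ yields a contactomorphism fixing~$K_0$ (this is where the powers of $r$ genuinely enter). A further lemma upgrades this to fixing a whole neighbourhood of~$K_0$. The crucial geometric fact is then that the \emph{complement} of $K_0$ in $(S^1\times S^2,\xist)$ is contactomorphic to the space of contact elements of~$\R^2$; now Giroux's theorem applies and finishes the argument. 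Your proposal is missing precisely this step --- fixing the $H_1$-generating Legendrian and identifying its complement --- which is the device that makes Giroux's result relevant.
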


The result $\D (S^1\times S^2)\cong\Z_2\oplus\Z_2
\oplus\Z_2$ is an immediate consequence: from the split short exact sequence
above we know that $\D (S^1\times S^2)$ is the semidirect product of
the normal subgroup $\Z_2$ and the quotient $\Z_2\oplus\Z_2$;
but a normal subgroup of order $2$ is central, so the action of the
quotient by conjugation is trivial.

Thus, let $f$ be a diffeomorphism of $S^1\times S^2$ acting trivially on
homology. (In particular, $f$ preserves the orientation.)
The strategy will be to isotope $f$ step by step to a
diffeomorphism satisfying a number of additional properties, until
we arrive at id or~$r$. After each step, we continue to write $f$ for the
new diffeomorphism.
\section{From a diffeomorphism to a contactomorphism}
\label{section:contacto}
We shall rely freely on some fundamental notions and results from contact
topology, all of which can be found in~\cite{geig08}.

The standard tight contact structure
$\xist$ on $S^1\times S^2\subset S^1\times\R^3$ is given by
\[ \alpha:=x_3\, d\theta +x_1\, dx_2-x_2\, dx_1 =0.\]
This is the unique positive tight contact structure on $S^1\times S^2$ up to
isotopy, see ~\cite[Thm.~4.10.1]{geig08}.

\begin{lem}
The diffeomorphism $f$ is isotopic to a
contactomorphism\footnote{Contact structures are assumed to be
cooriented; contactomorphisms are understood to
preserve the coorientation.}
of~$\xist$.
\end{lem}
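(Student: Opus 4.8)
The plan is to exploit the fact that $f$ acts trivially on homology, hence is homotopic to the identity on the level of tangent data, and then invoke Gray stability together with the uniqueness of $\xist$. First I would observe that, since $f$ preserves orientation and acts trivially on $H_1$ and $H_2$, the pulled-back contact structure $f^*\xist$ is again a positive contact structure on $S^1\times S^2$ inducing the same coorientation conventions; by the uniqueness result $[\text{Thm.~4.10.1}]{geig08}$ cited above, $f^*\xist$ is isotopic to $\xist$. That is, there is a diffeotopy $\phi_t$ of $S^1\times S^2$ with $\phi_0=\id$ and $(\phi_1)_*(f^*\xist)=\xist$, i.e.\ $(\phi_1)_*\circ f^*$ fixes $\xist$. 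The only subtlety here is checking that the uniqueness theorem is stated for contact structures up to isotopy (not merely up to diffeomorphism); since the excerpt quotes it in exactly that isotopy form, this is immediate.

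Next I would translate this into a statement about $f$ itself. Setting $g:=\phi_1\circ f$, we have $g^*\xist = f^*\circ(\phi_1)^*\,\xist$; rearranging the relation above gives $g^*\xist=\xist$, so $g$ is a contactomorphism of $\xist$ (and it preserves the coorientation, since $f$ and $\phi_1$ do). Moreover $g$ is isotopic to $f$, because $\phi_1$ is isotopic to $\id$ through the diffeotopy $\phi_t$. Hence $f$ is isotopic to the contactomorphism $g$, which is what we want.

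The one genuinely delicate point — the place where I expect the real work to sit — is the coorientation bookkeeping: one must ensure that the isotopy produced by the uniqueness theorem can be taken to respect the chosen coorientation, so that $g$ qualifies as a contactomorphism in the sense of the footnote (coorientation-preserving). Since $S^1\times S^2$ is connected and the space of coorientations of a fixed contact plane field has two components, any contact structure isotopic to $\xist$ is isotopic to it as a \emph{cooriented} contact structure once we fix the coorientation compatibly at one point; that $f$ itself does not flip the coorientation is guaranteed by its acting trivially on $H_2$ (it preserves the orientation of the $S^2$-factor, hence of the contact planes along it). Beyond this, everything is a direct appeal to Gray stability packaged inside the cited classification, so the lemma follows without further computation.
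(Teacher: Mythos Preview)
Your approach is the same as the paper's: observe that the image of $\xist$ under $f$ is again positive and tight, invoke the uniqueness (up to isotopy) of the tight contact structure on $S^1\times S^2$, and let Gray stability produce the isotopy. The paper's proof is literally two sentences to this effect.

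There is, however, a genuine algebraic slip in your second paragraph. From $(\phi_1)_*(f^*\xist)=\xist$ one obtains $(\phi_1^{-1})^*f^*\xist=\xist$, i.e.\ $(f\circ\phi_1^{-1})^*\xist=\xist$. So the contactomorphism isotopic to $f$ is $f\circ\phi_1^{-1}$, not your $g=\phi_1\circ f$. With your choice one has
\[
g^*\xist=f^*\phi_1^*\xist=f^*(f^*\xist)=(f^2)^*\xist,
\]
since applying $\phi_1^*$ to both sides of $(\phi_1)_*(f^*\xist)=\xist$ gives $\phi_1^*\xist=f^*\xist$; and $(f^2)^*\xist$ has no reason to equal~$\xist$. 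The fix is immediate (replace $g$ by $f\circ\phi_1^{-1}$), and the rest of your argument then goes through unchanged. The paper sidesteps this trap by working with the pushforward $Tf(\xist)$ rather than the pullback.

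Your coorientation discussion is correct but more elaborate than necessary: positivity, tightness, and coorientation are all carried along by any diffeomorphism, so the cited classification applies directly to $Tf(\xist)$ (or $f^*\xist$) without further argument. The paper does not mention coorientation at all.
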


\begin{proof}
The contact structure
$Tf(\xist )$, which is again positive and tight, is isotopic
to~$\xist$. Gray stability~\cite[Thm.~2.2.2]{geig08} then gives the
desired isotopy.
\end{proof}

Later on we shall need a contactomorphism $\rc$ representing
the class~$[r]$ (there should be no confusion with the
notation $r_{\theta}$ used earlier).
There are two ways of exhibiting such a contactomorphism:
the first one uses the above description of $(S^1\times S^2,\xist )$;
the second one is better adapted to describing the effect on
Legendrian curves in the front projection picture.

A straightforward computation yields
\[ r^*\alpha = (x_3+x_1^2+x_2^2)\, d\theta +x_1\, dx_2-x_2\, dx_1.\]
We claim that $r$ can be isotoped to a contactomorphism $\rc$
by an isotopy that shifts each $2$-sphere $\{\theta\}\times S^2$
along its characteristic foliation induced by~$\xist$. Indeed, that foliation
is given by the vector field
\[ X=x_1x_3\partial_{x_1}+x_2x_3\partial_{x_2}+(x_3^2-1)\partial_{x_3},\]
with singular points at $(x_1,x_2,x_3)=(0,0,\pm 1)$. One computes
\[ L_X\alpha = i_Xd\alpha = (x_3^2-1)\, d\theta +2x_1x_3\, dx_2
-2x_2x_3\, dx_1 = 2x_3\alpha-(1+x_3^2)\, d\theta.\]
This shows that the flow of $X$ has the desired effect of decreasing
the $d\theta$-component relative to the $dx_1$- and $dx_2$-components;
thus, a suitable rescaling of $X$ by a function that depends only
on $x_3$ will give us a flow that moves the
contact structure $\ker (r^*\alpha )$ back to $\ker\alpha =\xist$.

An alternative picture, due to Gompf~\cite[p.~636]{gomp98},
is based on a contactomorphism
\[ \bigl( S^1\times (S^2\setminus\{\mbox{\rm poles}\} ),\xist \bigr)\cong
\bigl(\R\times (\R /2\pi\Z )^2,\ker (dz+x\, dy )\bigr) .\]
The $2$-spheres $\{\theta_0 \}\times S^2$
correspond to the annuli $\{ y=y_0\}$, each compactified by two points
at $x=\pm\infty$. Now a simple description of a contactomorphism
$\rc$ in the class $[r]$ is given by a Dehn twist along a circle
$\{ y=y_0\}$ in the torus $(\R /2\pi\Z )^2$ (plus a shift in the
$x$-direction to make it a contactomorphism):
\[ (x,y,z)\longmapsto (x-1,y,y+z).\]
Figure~\ref{figure:contact-r} shows the
effect of that Dehn twist on the Legendrian circle $t\mapsto (0,t,0)$ in
$\R\times (\R /2\pi\Z )^2$, followed by a Legendrian isotopy corresponding
to Gompf's `move~6', of which we shall see more in the next section.
(The figure shows the front projection to the $yz$-torus.)

\begin{figure}[h]
\labellist
\small\hair 2pt
\pinlabel $\cong$ at 198 74
\pinlabel $\rc$ at 198 50
\endlabellist
\centering
\includegraphics[scale=0.4]{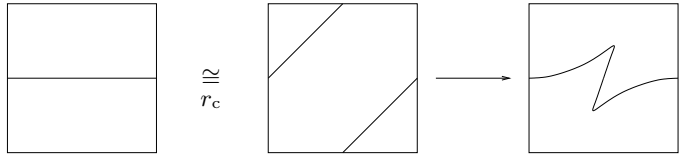}
  \caption{The contactomorphism $\rc$, followed by `move 6'.}
  \label{figure:contact-r}
\end{figure}
\section{Fixing a $2$-sphere}
As described in \cite[Section~2]{gomp98} or \cite[Section 11.1]{gost99},
cf.~\cite{dige09}, one can represent the contact manifold
$(S^1\times S^2,\xist )$ by the Kirby diagram with one $1$-handle only
(Figure~\ref{figure:1-handle}) in the standard contact structure
on~$S^3$. The attaching balls for the $1$-handle are drawn
as round balls, but it is understood that these balls are
in fact chosen in such a way that the characteristic foliation
on their boundary induced by the standard contact structure
on $S^3$ is the same as the characteristic foliation on
$\{\theta\}\times S^2$ induced by~$\xist$.

\begin{figure}[h]
\centering
\includegraphics[scale=0.25]{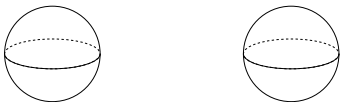}
  \caption{$S^1\times S^2$ with its standard tight contact structure.}
  \label{figure:1-handle}
\end{figure}

\begin{lem}
The contactomorphism $f$ is contact isotopic to a contactomorphism
fixing a sphere $S_0:=\{ 0\}\times S^2$, which we think of as
the boundary of the attaching balls in Figure~\ref{figure:1-handle}.
\end{lem}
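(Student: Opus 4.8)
The plan is to push the image sphere $f(S_0)$ back onto $S_0$ by a contact isotopy of $(S^1\times S^2,\xist)$ and then precompose $f$ with it. The relevant tool is Colin's theorem that in a tight contact $3$-manifold two embedded $2$-spheres which are smoothly isotopic are in fact isotopic through a contact isotopy.

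Before invoking that, I would note that $S_0$ is a convex surface for~$\xist$. Its characteristic foliation is the one directed by the vector field $X=x_1x_3\partial_{x_1}+x_2x_3\partial_{x_2}+(x_3^2-1)\partial_{x_3}$ from Section~\ref{section:contacto}: a Morse--Smale foliation with a single source and a single sink at the two poles and no closed leaves, whose dividing set is the equator $\{x_3=0\}$, in particular connected. Since $f$ is a contactomorphism, $f(S_0)$ carries the characteristic foliation $f_*X$ and is again a convex sphere with connected dividing set; so both $S_0$ and $f(S_0)$ are of the kind to which Colin's result applies directly.

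The one genuinely topological step is that $f(S_0)$ is \emph{smoothly} isotopic to~$S_0$. Since $f$ preserves the orientation and acts trivially on homology, $f(S_0)$ is homologous to~$S_0$, hence non-separating; and any non-separating embedded $2$-sphere in $S^1\times S^2$ is smoothly isotopic to the fibre sphere $S_0$ --- a classical fact, which may be seen from Laudenbach's isotopy theorem for embedded $2$-spheres in $3$-manifolds together with the observation that $f(S_0)$ and $S_0$ are homotopic (note $\pi_2(S^1\times S^2)\cong H_2(S^1\times S^2)\cong\Z$); cf.~\cite{gluc62}. I expect this to be the step needing the most care, since it is the one place where we must import a result from differential rather than contact topology; the rest is formal.

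Given that $f(S_0)$ is smoothly isotopic to~$S_0$, Colin's theorem yields a contact isotopy $(\phi_t)_{t\in[0,1]}$ of $(S^1\times S^2,\xist)$ with $\phi_0=\id$ and $\phi_1(f(S_0))=S_0$. Then $\phi_1\circ f$ represents the class~$[f]$ --- it is joined to $f$ by the path $t\mapsto\phi_t\circ f$ of contactomorphisms --- and it maps $S_0$ onto itself. Replacing $f$ by $\phi_1\circ f$ gives the desired contactomorphism. (Here ``fixing $S_0$'' is meant set-wise; normalising $f$ further on $S_0$ --- say so as to preserve its characteristic foliation, or a collar neighbourhood --- is the task of the subsequent reduction steps.)
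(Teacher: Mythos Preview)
Your proof is correct and follows essentially the same route as the paper: invoke Colin's theorem for spheres in tight contact $3$-manifolds, with the topological isotopy of $f(S_0)$ and $S_0$ supplied by Laudenbach/Gluck. The only cosmetic difference is that the paper sketches the topological isotopy argument explicitly (innermost-circle separation followed by an $h$-cobordism argument), whereas you cite it as a black box; your additional remarks on convexity and the dividing set are not strictly needed, since the characteristic foliations of $S_0$ and $f(S_0)$ already agree because $f$ is a contactomorphism.
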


\begin{proof}
Since $f$ is a contactomorphism, $\xist$ induces the same
characteristic foliation on $f(S_0)$ as on~$S_0$. As shown
by Colin~\cite{coli97}, with $\xist$ being tight this implies that
$f(S_0)$ and $S_0$ are contact isotopic --- and hence $f$ contact isotopic
to a contactomorphism fixing~$S_0$ ---, provided the two $2$-spheres
are topologically isotopic.

For showing the existence of such a topological isotopy, we essentially
rely on stage one of Gluck's proof~\cite{gluc62}; the argument is included
here for the reader's convenience.

If $S_1:=f(S_0)$ is disjoint from $S_0$, then those two spheres bound
a compact manifold that constitutes an $h$-cobordism $W$ between them.
This follows from the fact that the two spheres are homotopic; recall
that $f$ acts trivially on $H_2(S^1\times S^2)=\pi_2(S^1\times S^2)$.
This $h$-cobordism $W$ is contained inside $S^1\times S^2$ and therefore
does not contain any fake $3$-cells --- without appeal to Perelman's
positive answer to the Poincar\'e conjecture. It follows that $W$ is
diffeomorphic to $S^2\times [0,1]$, and hence $S_0$ isotopic to~$S_1$.
(This argument is due to Laudenbach~\cite{laud73}.)

In the general case, we first
use an isotopy to bring $S_0$ and $S_1$ into general position,
such that they intersect transversely in a finite number of circles.
We want to isotope $S_1$ further to a sphere disjoint from~$S_0$;
as just explained this will conclude the argument.

Let $C$ be one of the circles of intersection, chosen in such a way
that it bounds a $2$-disc $D_1$ in~$S_1$ not containing
any other circles of intersection. In $S_0$, the circle
$C$ bounds two $2$-discs $D_0$ and $D_0'$. One of the
$2$-spheres $D_0\cup D_1$ and $D_0'\cup D_1$, say the former,
bounds a $3$-ball, as can be seen by considering the situation
in the universal cover of $S^1\times S^2$. This allows us to isotope
$S_1$ across this $3$-ball in order to remove the circle $C$ of
intersection. In the process, all circles of intersection
contained in $D_0$ will be removed as well. See
Figure~\ref{figure:remove} for a schematic picture. Iterating this
procedure, we separate $S_0$ and~$S_1$.
\end{proof}

\begin{figure}[h]
\labellist
\small\hair 2pt
\pinlabel $D_0$ [bl] at 170 93
\pinlabel $D_1$ [bl] at 228 100
\pinlabel $S_1$ [br] at 86 90
\pinlabel $S_0$ [b] at 145 145
\pinlabel {$S^1\times S^2$} [b] at 248 145
\endlabellist
\centering
\includegraphics[scale=0.5]{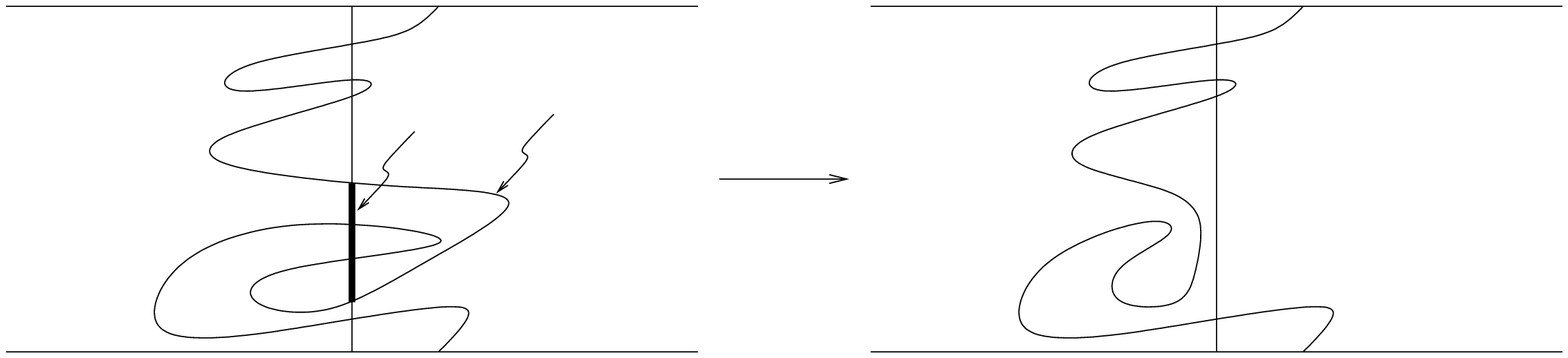}
  \caption{Removing intersections between $S_0$ and $S_1$.}
  \label{figure:remove}
\end{figure}
\section{Fixing a Legendrian circle}
\label{section:Legcircle}
Now consider the oriented Legendrian circle $K_0\subset (S^1\times S^2,\xist )$
representing what we shall call the {\em positive}
generator of $H_1(S^1\times S^2)$, as shown in the
front projection picture in Figure~\ref{figure:1-handle-K}. (It is
understood that $\R^3\subset S^3$ be equipped with the standard contact
structure $dz+x\, dy=0$; Legendrian knots are illustrated in
the front projection to the $yz$-plane.) This corresponds to the
Legendrian circle on the left-hand side in
Figure~\ref{figure:contact-r}. In particular,
that figure shows that the Legendrian knot $\rc (K_0)$ is Legendrian isotopic
to the positive stabilisation $S_+K_0$ of~$K_0$.

\begin{figure}[h]
\centering
\includegraphics[scale=0.25]{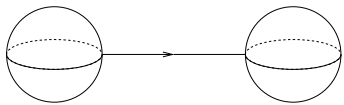}
  \caption{The Legendrian circle $K_0$.}
  \label{figure:1-handle-K}
\end{figure}

\begin{lem}
For some $k\in\Z$, the contactomorphism $\rc^k\circ f$ is contact
isotopic to a contactomorphism fixing~$K_0$.
\end{lem}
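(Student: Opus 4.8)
The plan is to track the image of $K_0$ under $f$ and cancel the discrepancy with a power of~$\rc$. First I would record that $\rc$ may be taken to fix~$S_0$: in the first construction of~$\rc$ in Section~\ref{section:contacto}, $\rc$ is obtained from $r$ by an isotopy sliding each $2$-sphere $\{\theta\}\times S^2$ along its characteristic foliation, and since $r(\{\theta\}\times S^2)=\{\theta\}\times S^2$ we get $\rc(\{\theta\}\times S^2)=\{\theta\}\times S^2$ for all~$\theta$, in particular $\rc(S_0)=S_0$. As $f$ fixes $S_0$ by the previous lemma, so does $\rc^k\circ f$ for every $k\in\Z$. The lemma thus reduces to finding a $k$ for which $\rc^k\circ f(K_0)$ is \emph{Legendrian isotopic} to~$K_0$: such an isotopy extends to an ambient contact isotopy (Legendrian isotopy extension, see~\cite{geig08}), and composing it with $\rc^k\circ f$ produces the desired contactomorphism fixing~$K_0$.

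Next I would check that $L:=\rc^k\circ f(K_0)$ is topologically isotopic to $K_0$, whatever $k$ is. The diffeomorphism $\rc^k\circ f$ carries the complement of a tubular neighbourhood of $K_0$ --- a solid torus, since $K_0$ is (topologically) the core circle of $S^1\times S^2$ --- diffeomorphically onto the complement of a tubular neighbourhood of~$L$. Hence $L$ too has solid-torus complement, so it is the core of a genus-one Heegaard splitting of $S^1\times S^2$; by the uniqueness up to isotopy of that splitting (a classical fact, also implicit in Gluck's argument~\cite{gluc62}), $L$ is isotopic to~$K_0$, with matching orientation because $\rc^k\circ f$ acts trivially on~$H_1$.

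The core of the argument is then to adjust $k$ so as to match the contact invariants. I would fix once and for all a trivialisation $\tau$ of the plane bundle~$\xist$ (possible since its Euler class vanishes), so that $\tb_{\tau}$ and $\rot_{\tau}$ become integer-valued invariants of Legendrian knots in the generator class of~$H_1$. As recorded above (see Figure~\ref{figure:contact-r}), $\rc(K_0)$ is Legendrian isotopic to the positive stabilisation $S_+K_0$; thus $\rc$ leaves $\tb_{\tau}+\rot_{\tau}$ unchanged, and I would invoke the classification of Legendrian representatives of the core of $S^1\times S^2$ to the effect that the representatives maximising $\tb_{\tau}+\rot_{\tau}$ form precisely the $\Z$-family $\{\rc^n(K_0)\}_{n\in\Z}$, the member $\rc^n(K_0)$ being characterised within that family by $\rot_{\tau}=\rot_{\tau}(K_0)+n$. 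On the other hand, a contactomorphism alters $\tau$ by a class $c\in H^1(S^1\times S^2;\Z)\cong\Z$, and since $[K_0]$ generates $H_1$ this shifts $\tb_{\tau}$ by $c$ and $\rot_{\tau}$ by $-c$; hence $\tb_{\tau}+\rot_{\tau}$ is unchanged by \emph{every} contactomorphism on knots in the generator class. Therefore $f(K_0)$ again maximises $\tb_{\tau}+\rot_{\tau}$, and being of the topological type of the core it must be Legendrian isotopic to $\rc^m(K_0)$ with $m:=\rot_{\tau}(f(K_0))-\rot_{\tau}(K_0)$. Taking $k=-m$ then gives that $\rc^k\circ f(K_0)$ is Legendrian isotopic to $\rc^{-m}(\rc^m(K_0))=K_0$, as wanted.

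The step I expect to be the real obstacle is this appeal to the classification of Legendrian realisations of the core of $S^1\times S^2$ --- precisely, that the maximisers of $\tb_{\tau}+\rot_{\tau}$ are exactly the $\rc^n(K_0)$ and are told apart by $\rot_{\tau}$. A classification of this type is due to Fraser, but (as we discuss in Section~\ref{section:knots}) her argument has a gap; I would therefore isolate the exact statement needed and re-prove it by a convex-surface analysis in the piece $S^2\times[0,1]$ obtained by cutting $S^1\times S^2$ along a standard neighbourhood of~$S_0$, which is where I expect the genuine work to lie.
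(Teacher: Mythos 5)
Your overall skeleton agrees with the paper: reduce the lemma to showing that $\rc^k\circ f(K_0)$ is Legendrian isotopic to $K_0$ for a suitable $k$, and then extend that Legendrian isotopy to an ambient contact isotopy. But the heart of the matter --- showing that $f(K_0)$ is Legendrian isotopic to some $\rc^n(K_0)$, equivalently to a one-signed multiple stabilisation of $K_0$ --- is exactly what you do not prove. You appeal to a ``classification of Legendrian representatives of the core of $S^1\times S^2$'' characterised as the maximisers of $\tb_{\tau}+\rot_{\tau}$, but no such classification is available off the shelf (Fraser's paper does not supply it, as you note), and you explicitly defer its proof to a convex-surface analysis you do not carry out. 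Since that statement is at least as strong as the lemma itself, the proposal has a genuine gap precisely where the paper does its work: the paper proves the needed fact directly and constructively, by putting $f(K_0)$ in standard form and using Gompf's move~6 / the light bulb trick to remove crossings, reduce the winding of the front, and cancel stabilisations of opposite sign, leaving $S_{\pm}^nK_0$.

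There is also a problem with the formulation of the invariant you want to maximise. The knot $K_0$ generates $H_1(S^1\times S^2)$, so it is not nullhomologous, and $\tb$ is not a well-defined integer invariant in this class: the very light bulb trick used in the paper shows $S_+S_-K_0$ is Legendrian isotopic to $K_0$, which is incompatible with any isotopy-invariant ``$\tb_{\tau}$'' that drops by $1$ under each stabilisation. (This is why the paper's remark in Section~\ref{section:Legcircle} only asserts that the \emph{rotation} number is well defined, via the global trivialisation of~$\xist$; a trivialisation of the plane field does not by itself produce an isotopy-invariant framing, and your claim about how a contactomorphism shifts $\tb_{\tau}$ and $\rot_{\tau}$ by opposite amounts is not justified.) Concretely, with front-computed invariants the knots $\rc^{-n}(K_0)\simeq S_-^nK_0$, $n>0$, have $\tb+\rot$ equal to $\tb(K_0)+\rot(K_0)-2n$, so they would not be maximisers, contradicting your claim that the maximisers form the whole family $\{\rc^n(K_0)\}_{n\in\Z}$; the characterisation is internally inconsistent unless one already knows the (correct) classification that representatives of this class are determined by $\rot$ alone --- which is, again, essentially the content to be proved. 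To repair the argument you should replace the appeal to this classification by the light bulb trick argument itself (or genuinely carry out the convex-surface proof you sketch), after which the bookkeeping with $\rot$ and the choice $k=-m$ goes through as you describe.
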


\begin{proof}
The image $f(K_0)$ will be some Legendrian knot representing
the positive generator of $H_1(S^1\times S^2)$ and, since $f$ fixes~$S_0$,
going exactly once over the $1$-handle. With the help of
`move~6' from~\cite{gomp98}, or what is also called the light bulb
trick (cf.~\cite{dige09}), one can unknot $f(K_0)$
via a Legendrian isotopy (which extends to a contact isotopy
by~\cite[Thm.~2.6.2]{geig08}). An example is shown in
Figure~\ref{figure:lightbulb1}, where the final result of the isotopy
is actually~$K_0$. In general, the result will be some
(multiple) stabilisation of~$K_0$.

\begin{figure}[h]
\centering
\includegraphics[scale=0.45]{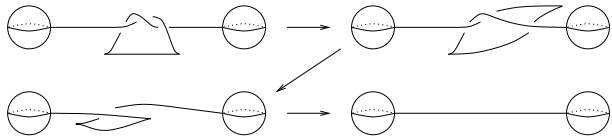}
  \caption{The light bulb trick used for unknotting.}
  \label{figure:lightbulb1}
\end{figure}

Here is a more `algorithmic' description of this unknotting procedure.
First of all, by \cite[Thm.~2.2]{gomp98} we may assume that,
after a Legendrian isotopy, $f(K_0)$ is in standard form, i.e.\
its front projection is contained entirely between the two attaching
balls for the $1$-handle. Given a knotted piece of string with loose
ends, one can clearly unknot it by contracting the string from one of its
ends. If we imagine the attaching balls as the ends of such a piece of
string, this contraction can be regarded as a motion of the right-hand ball,
say. We thus remove all crossings in the front projection, while
preserving the cusps; we need Legendrian Reidemeister moves
of the second kind to slide all the cusps adjacent to the
right-hand attaching ball over or under another strand in order to
remove the crossing with that strand. With the light bulb trick this
translates into a Legendrian isotopy with the attaching balls fixed.
The final result of this Legendrian isotopy will be a Legendrian knot
whose front projection has no crossings, but which now winds several times
around the right-hand attaching ball in the $yz$-plane. One can bring
the knot back into standard form (and still no crossings in the
front projection) as follows: perform a move of type~6 to introduce a single
kink in the front projection; then remove the kink with a Legendrian
Reidemeister move of the first kind (see Figure~\ref{figure:kink});
each such move reduces the (absolute) winding number of the front projection
of $f(K_0)$ around the right-hand ball.

\begin{figure}[h]
\centering
\includegraphics[scale=0.45]{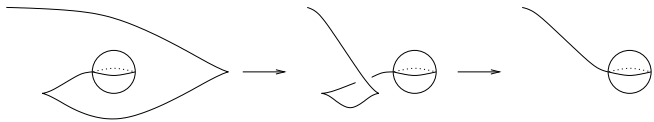}
  \caption{Reducing the winding number.}
  \label{figure:kink}
\end{figure}

Positive and negative stabilisations can then be removed in pairs by
a further application of the light bulb trick, as shown in
Figure~\ref{figure:lightbulb2}.

\begin{figure}[h]
\centering
\includegraphics[scale=0.45]{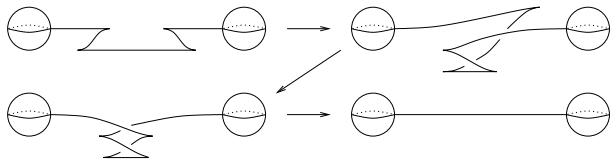}
  \caption{The light bulb trick used for removing stabilisations.}
  \label{figure:lightbulb2}
\end{figure}

Thus, $f$ is contact isotopic to a contactomorphism that maps $K_0$
to a stabilisation $S_{\pm}^nK_0$ for some $n\in\N_0$.
Then $\rc^{\mp n}\circ f$ is contact isotopic to a contactomorphism
that fixes~$K_0$.
\end{proof}

\begin{rem}
Even powers of $\rc$ are isotopic to the identity, but not contact
isotopic to the identity. This follows from the observation that
the application of $\rc$ to $K_0$ increases its rotation number by~$1$.
Notice that $\xist$ is trivial as a $2$-plane bundle; a global
non-vanishing section of $\xist$ is given by
\[ (x_2-x_1)\partial_{\theta}+ x_3\partial_{x_1}+x_3\partial_{x_2}-
(x_1+x_2)\partial_{x_3}.\]
So the rotation number is well defined for arbitrary Legendrian
knots in the contact manifold $(S^1\times S^2,\xist )$.
\end{rem}
\section{Fixing a neighbourhood of a Legendrian circle}
We now want to show that after a further contact isotopy we may assume
that $\rc^k\circ f$ fixes a whole neighbourhood
of $K_0$. We formulate this as a general statement.

\begin{lem}
\label{lem:nbhd}
Let $K$ be a Legendrian knot in a contact $3$-manifold $(M,\xi )$,
and let $g$ be a contactomorphism of $(M,\xi )$ that fixes~$K$.
Then $g$ is contact isotopic to a contactomorphism that fixes
a neighbourhood of $K$.
\end{lem}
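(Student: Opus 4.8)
The plan is to reduce the statement to a parametrised version of the standard neighbourhood theorem for Legendrian knots. Recall that a Legendrian knot $K$ in $(M,\xi)$ has a neighbourhood contactomorphic to a neighbourhood of the zero section in the $1$-jet space $J^1(S^1)=T^*S^1\times\R$ with its canonical contact structure $\ker(dz-y\,dx)$; fix such a contactomorphism $\psi$ once and for all, sending $K$ to the zero section. Then $g$, restricted to a smaller neighbourhood of $K$, is transported by $\psi$ to a contactomorphism $\tilde g$ of a neighbourhood of the zero section in $J^1(S^1)$ that fixes the zero section setwise. First I would arrange, by composing with the flow of a suitable contact vector field supported near $K$ (or simply by a reparametrisation of $K$ followed by applying the fact that contactomorphisms of $S^1$ acting trivially on $\pi_0$ are contact isotopic to the identity through such maps), that $\tilde g$ restricts to the \emph{identity} on the zero section itself, not merely fixing it as a set; here one may need to note that $g$ cannot reverse the orientation of $K$, which holds because $K$ is homologically essential and $g=\rc^k\circ f$ acts trivially on $H_1$ (in the application), or can be arranged in general after composing with an extra isotopy.

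The main step is then an isotopy of $\tilde g$, rel the zero section, to a map that is the identity on a whole neighbourhood. For this I would use the standard Moser-type argument: linearise $\tilde g$ along the zero section to see that $d\tilde g$ along $K$ preserves $\xi|_K$ and the conformal symplectic structure on it, so $\tilde g$ agrees with the identity to first order up to a bundle automorphism; the group of such automorphisms ($\mathrm{Sp}(2,\R)$-valued, or rather the relevant conformal symplectic group) is connected, so after a further contact isotopy we may assume $d\tilde g=\id$ along $K$. At that point $\tilde g$ and $\id$ are two contact embeddings of a neighbourhood of the zero section that agree to first order along it, and the usual neighbourhood-theorem interpolation — writing $\tilde g_t$ as the time-$t$ map of a time-dependent contact vector field $Y_t$ that vanishes to second order along $K$, hence is $C^1$-small on a small enough neighbourhood — produces a contact isotopy from $\tilde g$ to $\id$ on a possibly smaller neighbourhood, fixing $K$ throughout. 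Transporting everything back by $\psi^{-1}$ and extending the contact isotopy from the neighbourhood to all of $M$ (using that a contact isotopy of an open subset defined by a globally-defined-enough contact Hamiltonian extends, or cutting off the Hamiltonian) gives the desired contactomorphism fixing a neighbourhood of $K$.

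The hard part is bookkeeping rather than conceptual: one must make sure that every intermediate isotopy can be taken to fix $K$ (and eventually a neighbourhood) and to be a \emph{global} contact isotopy of $(M,\xi)$, not just a local one. The cleanest way is to phrase each step via a contact Hamiltonian $H_t\co M\to\R$ (with respect to a fixed contact form) that, together with its relevant derivatives, vanishes along $K$; such Hamiltonians generate contact isotopies fixing $K$, they can be multiplied by a global cut-off function without destroying that vanishing, and the resulting flows are globally defined on the compact manifold $M$. I expect the only genuine subtlety is the orientation issue for $K$ flagged above — in the generality of the lemma as stated one has to observe that if $g$ reverses the orientation of $K$ one can first compose with a global contact isotopy (rotating a standard neighbourhood of $K$ by $\pi$) to fix that, and only then run the argument; everything else is a routine application of the Moser method and the connectedness of the relevant structure group.
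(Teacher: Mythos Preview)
Your approach is essentially the paper's: both work in the standard neighbourhood model $(S^1\times\R^2,\ker(dz-y\,dx))$ and interpolate $g$ to the identity by passing to its linearisation along $K$ and then connecting that to the identity, the paper doing this explicitly via the contactomorphic dilatation $\delta_t(x,y,z)=(x,ty,tz)$ (the family $\delta_t^{-1}\circ g\circ\delta_t$ converges as $t\to 0$ to the linearisation $g_0(x,y,z)=(x,\lambda_0(x)y+\lambda_0'(x)z,\lambda_0(x)z)$, which is then joined to the identity by convexly interpolating $\lambda_0$ with~$1$) --- this is precisely what your ``Moser-type interpolation'' and ``connectedness of the structure group'' unpack to. The paper reads ``fixes $K$'' as pointwise, so your orientation worry does not arise; note also that the paper offers an alternative proof via Honda's classification of nonrotative tight structures on $T^2\times[0,1]$ together with Giroux's theorem on contactomorphisms of spaces of contact elements.
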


\begin{proof}
By the tubular neighbourhood theorem for Legendrian
knots~\cite[Cor.~2.5.9]{geig08}, we may identify a tubular neighbourhood
$N(K)$ of $K$ in $(M,\xi )$ with a tubular neighbourhood of
$S^1\times\{ 0\}$ in $S^1\times \R^2=\R /\Z\times\R^2$ with contact structure
$dz-y\, dx=0$; the knot $K$ is identified with $S^1\times\{ 0\}$.

The dilatation
\[ \delta_t(x,y,z):=(x,ty,tz)\]
is a contactomorphism of $S^1\times\R^2$ for each $t\in\R^+$.
This allows us to assume that
the contactomorphic image of $N(K)$ in $S^1\times\R^2$ has been chosen
so large that when we restrict the contactomorphism induced by $g$ to
$S^1\times D^2$, its image will stay inside this image of~$N(K)$
(and that the same holds for the $1$-parameter family of contact embeddings
considered below). In fact, the argument in the proof of
\cite[Prop.~3.1]{cvks09} can be used to show that we may identify
$N(K)$ contactomorphically with all of $S^1\times\R^2$. Although this is not
essential, we shall assume it for ease of notation. Thus, we think of
(the restriction of) $g$ as a contact embedding
\[ g\co S^1\times D^2\longrightarrow S^1\times\R^2.\]
It will suffice to show that $g$ is contact isotopic to the inclusion;
the lemma then follows from the isotopy extension theorem for
contact isotopies, cf.~\cite[Remark~2.6.8]{geig08}.

We now mimic the proof of the contact disc theorem~\cite[Thm.~2.6.7]{geig08}.
Write $g$ in the form
\[ (x,y,z)\longmapsto \bigl( X(x,y,z),Y(x,y,z),Z(x,y,z)\bigr).\]
The condition for this to be a {\em contact\/} embedding is
\[ dZ-Y\, dX=\lambda (dz-y\, dx)\]
with some smooth function $\lambda\co S^1\times D^2\rightarrow\R^+$.
This can be rewritten as the following system of differential
equations:
\[\left\{\begin{array}{rcl}
\displaystyle{\frac{\partial Z}{\partial x}-Y
\frac{\partial X}{\partial x}} & = & -\lambda y,\\
\displaystyle{\frac{\partial Z}{\partial y}-Y
\frac{\partial X}{\partial y}} & = & 0,\rule{0cm}{7mm}\\
\displaystyle{\frac{\partial Z}{\partial z}-Y
\frac{\partial X}{\partial z}} & = & \lambda \rule{0cm}{7mm}.
\end{array}\right.\]
The assumption that $g$ fixes $K$ translates into
\[ X(x,0,0)=x,\;\;\; Y(x,0,0)=0,\;\;\; Z(x,0,0)=0.\]
Now, for $t\in (0,1]$, consider the contact embedding
\[ \delta_t^{-1}\circ g\circ\delta_t (x,y,z)=
\bigl( X(x,ty,tz),\frac{1}{t}Y(x,ty,tz),\frac{1}{t}Z(x,ty,tz)\bigr).\]
For $t\to 0$, this converges to the map
\[ g_0(x,y,z):= \bigl( x, y\cdot\frac{\partial Y}{\partial y}(x,0,0)+
z\cdot \frac{\partial Y}{\partial z}(x,0,0),
y\cdot \frac{\partial Z}{\partial y}(x,0,0)+
z\cdot\frac{\partial Z}{\partial z}(x,0,0)\bigr) .\]
From the above system of differential equations we deduce
\[ \frac{\partial Z}{\partial y}(x,0,0)=0,\;\;\; \frac{\partial Z}{\partial z}
(x,0,0)=\lambda (x,0,0)=:\lambda_0(x).\]
The first differential equation in the above system gives
\[ \frac{\partial^2Z}{\partial z\partial x}-\frac{\partial Y}{\partial z}
\frac{\partial X}{\partial x}-Y\frac{\partial^2X}{\partial z\partial x}=
-\frac{\partial\lambda}{\partial z}\cdot y.\]
When we evaluate this at $(x,0,0)$, we find with the previous equations:
\[ \lambda_0'(x)=\frac{\partial\lambda}{\partial x}(x,0,0)=
\frac{\partial^2Z}{\partial x\partial z}(x,0,0)=
\frac{\partial Y}{\partial z}(x,0,0).\]
Finally, the first differential equation also yields
\[ \frac{\partial^2Z}{\partial y\partial x}-
\frac{\partial Y}{\partial y}\frac{\partial X}{\partial x}
-Y\frac{\partial^2X}{\partial y\partial x}=
-\frac{\partial\lambda}{\partial y}\cdot y -\lambda.\]
Since $\displaystyle{\frac{\partial Z}{\partial y}(x,0,0)=0}$,
we also have $\displaystyle{\frac{\partial^2Z}{\partial x\partial y}
(x,0,0)=0}$.
Thus, evaluating the foregoing equation at $(x,0,0)$ gives
\[ \frac{\partial Y}{\partial y}(x,0,0)=\lambda_0(x).\]

In conclusion, we see that the map $g_0$ takes the form
\[ g_0(x,y,z)=(x,\, y\cdot\lambda_0(x)+z\cdot\lambda_0'(x),\, z\cdot
\lambda_0(x)).\]
It is easy to check that any map of this form (with $\lambda_0\co
S^1\rightarrow\R^+$) is a contact embedding of $S^1\times D^2$ into
$S^1\times\R^2$.

Our initial embedding $g$ is thus seen to be contact isotopic to~$g_0$, and
the convex linear interpolation between $\lambda_0$ and the constant
function~$1$ defines a contact isotopy between $g_0$ and the inclusion
map. This finishes the proof of the lemma.
\end{proof}

\begin{rem}
As explained in \cite[Example 2.5.11]{geig08}, a
universal model for the tubular neighbourhood of a Legendrian
submanifold $L$ in a higher-dimensional contact manifold is
provided by a neighbourhood of the zero section $L\subset T^*L\subset
\R\times T^*L$ in the $1$-jet bundle of $L$ with its canonical
contact structure $dz-\lambda_{\mathrm{can}}=0$, where
$\lambda_{\mathrm{can}}$ is the canonical $1$-form on $T^*L$,
written in local coordinates $\mathbf{q}$ on $L$ and dual coordinates
$\mathbf{p}$ on the fibres of $T^*L$ as $\lambda_{\mathrm{can}}=
\mathbf{p}\, d\mathbf{q}$. The above proof carries over,
{\em mutatis mutandis}, to show that the tubular neighbourhood of a Legendrian
submanifold is unique not only up to contactomorphism, but up
to contact isotopy.
\end{rem}

Here is an alternative proof of Lemma~\ref{lem:nbhd}. Admittedly, the
methods used in it amount to cracking nuts with a sledgehammer,
but they may be of some independent interest.
We define a new contactomorphism $h$ of $(M,\xi )$ as follows,
cf.~\cite[Remark~4.1]{dige09a}.
Choose a standard tubular neighbourhood $N(K)$ of~$K$, where
the contact structure is given by $\cos\theta\, dx-\sin\theta\, dy=0$
under the identification of $N(K)$ with $S^1\times D^2$ (and $K$
with $S^1\times\{ 0\}$). Observe that $N(K)$ may be regarded as the
space of (cooriented) contact elements of~$D^2$.

Set $h=g$ on the closure of $M\setminus N(K)$. On a smaller tubular
neighbourhood $N'\subset N(K)$, set $h=\mbox{\rm id}$.
By the uniqueness up to contactomorphism of the non-rotative tight
contact structure on $T^2\times [0,1]$ with two dividing curves
on each boundary component, see~\cite{hond00}, this $h$ extends
to a contactomorphism on all of $(M,\xi )$.

Then $g^{-1}\circ h$ is a contactomorphism equal to the identity
on $M\setminus N(K)$. So the restriction of $g^{-1}\circ h$ to
$N(K)$ may be regarded as a contactomorphism, equal to the
identity near the boundary,
of the space of contact elements of~$D^2$. According to a result
of Giroux~\cite{giro01}, the group of those contactomorphisms
is connected. This gives a contact isotopy from $g^{-1}\circ h$
to the identity on~$M$. It follows that $g$ is contact isotopic to~$h$,
which has the desired properties.

\begin{rem}
Giroux's paper \cite{giro01} has to be read with a certain amount of
caution. Proposition~10 and the proofs of the main results
(though not the results as such)
are incorrect. The proofs can be fixed using the
methods of~\cite{mass08}.
\end{rem}
\section{Reduction to a space of contact elements}
\label{section:reduction}
In a final step, we want to appeal once more to the result of
Giroux~\cite{giro01} about contactomorphism groups of
spaces of contact elements.

\begin{lem}
The complement of $K_0$ in $(S^1\times S^2,\xist )$ is contactomorphic to
the space of contact elements of~$\,\R^2$.
\end{lem}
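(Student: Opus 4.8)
The plan is to reduce the statement to the fact that the space of cooriented contact elements of a surface, with its canonical contact structure, depends only on the diffeomorphism type of the surface. Since $S^2$ with a point removed is diffeomorphic to $\R^2$, it then suffices to exhibit a contactomorphism between $(S^1\times S^2,\xist )\setminus K_0$ and the space of contact elements of $S^2\setminus\{p\}$ for a suitable point~$p$. As a first, cosmetic step I would replace $K_0$ by a convenient representative of its Legendrian isotopy class --- Legendrian isotopic knots having contactomorphic complements. The circle $S^1\times\{(1,0,0)\}$ is Legendrian, since along it $\alpha (\partial_{\theta})=x_3=0$, it has vanishing rotation number with respect to the global trivialisation of $\xist$ recorded in Section~\ref{section:Legcircle}, and it represents the positive generator of $H_1(S^1\times S^2)$; so I may assume $K_0=S^1\times\{(1,0,0)\}$ and take $p=(1,0,0)$.

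Next I would split $S^2$ into the two closed hemispheres $\bar D=\{x_1\le 0\}$ and $\bar D'=\{x_1\ge 0\}$, meeting along the great circle $E=\{x_1=0\}$, with $p$ the centre of~$\bar D'$. Correspondingly $S^1\times S^2=V\cup_{S^1\times E}W$ with $V=S^1\times\bar D$ and $W=S^1\times\bar D'$, each a solid torus that is a standard tubular neighbourhood of its Legendrian core --- namely $K_0^-:=S^1\times\{(-1,0,0)\}$ for $V$, and $K_0$ for $W$. Indeed, a direct computation shows that the contact plane field along each of these circles is constant (equal to $\mathrm{span}(\partial_{\theta},\partial_{x_3})$ along~$K_0$), so that the contact framing agrees with the product framing; moreover $S^1\times E$, which after a $C^{\infty}$-small perturbation is convex, carries two dividing curves parallel to the $\theta$-direction. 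Using the model of a standard Legendrian neighbourhood as a space of contact elements of a disc (the setup of Lemma~\ref{lem:nbhd}) together with the classification of tight contact structures on solid tori~\cite{hond00}, this yields contactomorphisms $V\cong ST^*\bar D$ and $W\cong ST^*\bar D'$ under which $K_0^-$ and $K_0$ become the fibres over the respective centres. Deleting $K_0$ turns $W$ into $W\setminus K_0\cong ST^*(\bar D'\setminus\{p\})$, so that $(S^1\times S^2,\xist )\setminus K_0\cong ST^*\bar D\cup_{S^1\times E}ST^*(\bar D'\setminus\{p\})$.

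The step I expect to be the main obstacle is checking that, under these identifications, the gluing along $S^1\times E$ is the canonical identification of $ST^*\bar D$ with $ST^*\bar D'$ over~$E$: once this is known, the union is $ST^*\bigl(\bar D\cup_E(\bar D'\setminus\{p\})\bigr)=ST^*(S^2\setminus\{p\})$, which is the space of contact elements of~$\R^2$, as wanted. A priori the gluing that produces $S^1\times S^2$ differs from the one producing $ST^*S^2=\R P^3$ by a twist along the $\theta$-curve on $S^1\times E$; the key point is that $W\setminus K_0$, being the space of contact elements of a once-punctured disc, is flexible enough to absorb that discrepancy --- the characteristic foliation on the concentric tori between $E$ and a small torus around $p$ realises a whole range of slopes. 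I would make this precise either through Honda's classification of tight contact structures on $T^2\times[0,1]$~\cite{hond00}, or, more hands-on, by an explicit coordinate computation: away from the two poles of $S^2$ one has, in cylindrical-type coordinates, $\alpha=(1-x_3^2)\,(d\psi+w\, d\theta)$ with $w=x_3/(1-x_3^2)$, and one then writes down the change of variables carrying this --- together with its behaviour near the poles and near $p$ --- to the canonical form $\cos\phi\, du_1+\sin\phi\, du_2$ on the space of contact elements $\R^2_u\times S^1_{\phi}$ of~$\R^2$, with the customary care at the coordinate singularities.
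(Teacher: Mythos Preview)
Your approach is genuinely different from the paper's. The paper's proof is surgery-theoretic: it quotes an earlier result of the authors that the complement of a standard Legendrian unknot in $(S^3,\xist)$ is contactomorphic to $ST^*\R^2$, and then observes that $(S^1\times S^2,\xist)$ is obtained from $(S^3,\xist)$ by contact $(+1)$-surgery on such an unknot, with $K_0$ arising as the belt sphere. The cancellation lemma then identifies the complement of the belt sphere in the surgered manifold with the complement of the surgery curve in~$S^3$. No decomposition of $S^1\times S^2$ into solid tori and no coordinate computation is needed; the work has been exported to the cited references.

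Your direct route --- split into two solid tori, identify each with a space of contact elements of a disc, and match the gluings --- is sound in spirit and has the merit of being self-contained. However, the step you yourself flag as the main obstacle is essentially the entire content of the lemma, and you have not carried it out. The identification $W\cong ST^*\bar D'$ is not canonical: both are standard neighbourhoods of their Legendrian cores, hence contactomorphic, but a contactomorphism between them need not respect the product (base$\times$fibre) structure --- indeed, in the obvious coordinates the dividing slopes on the respective boundary tori do not agree --- so one cannot simply assert that the extra Dehn twist is ``absorbed''. Your final sentence points to the right remedy, namely an explicit change of variables carrying $(1-x_3^2)(d\psi+w(x_3)\,d\theta)$ to $\cos\phi\,du_1+\sin\phi\,du_2$; this can be made to work, but until the map is written down and its behaviour at $x_3=\pm 1$ and near $p$ is verified, the argument remains a sketch. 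Appealing to Honda's classification on $T^2\times[0,1]$ is not an immediate substitute either, since $W\setminus K_0$ is non-compact and one would have to control an exhaustion by thickened tori with prescribed boundary slopes on both ends.
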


\begin{proof}
In \cite{dige07} we described an explicit contactomorphism between
the space of contact elements of $\R^2$ and the complement of a Legendrian
unknot in $S^3$ with its standard contact structure
(which we shall also write as~$\xist$). That complement is
seen to be contactomorphic to $(S^1\times S^2\setminus K_0,\xist )$
as follows.

An alternative surgery picture for $(S^1\times S^2,\xist )$ is
given by a single contact $(+1)$-surgery along a Legendrian unknot in
$(S^3,\xist )$. In this picture,
$K_0$ becomes a Legendrian push-off of the surgery curve,
see~\cite{dige09}. The cancellation lemma from
\cite{dige04}, cf.~\cite[Prop.~6.4.5]{geig08}, says that
contact $(-1)$-surgery along $K_0$ brings us back to $(S^3,\xist )$.
More specifically (as the proof of the cancellation lemma
shows), $K_0$ may be regarded as the belt sphere
of the surgery along the Legendrian unknot in $(S^3,\xist )$,
and the complement of that belt sphere in the surgered
manifold is indeed contactomorphic to the complement of the surgery
curve in the initial manifold.
\end{proof}

In the preceding section we had found an integer $k$ such that
(after a contact isotopy) $\rc^k\circ f$ fixes a
neighbourhood of~$K_0$. So we may interpret this map
as a contactomorphism of the space of contact elements of~$D^2$,
equal to the identity near the boundary. By Giroux~\cite{giro01}, this
contactomorphism is contact isotopic (rel boundary) to the identity.

Thus, in total, our initial diffeomorphism $f$ of $S^1\times S^2$
(acting trivially on homology) has been shown to be isotopic to either
id or~$r$, as was claimed in Proposition~\ref{prop:kernel}.

\begin{rems}
(1) The result of Giroux about the contactomorphism group of the
space of contact elements of $D^2$ uses Cerf's theorem
$\pi_0(\Diff^+(S^3))=0$ in its proof. So the described methods cannot,
as yet, be used to give a contact geometric proof of Cerf's theorem.
However, there is in fact a contact geometric proof of the
slightly weaker form of Cerf's theorem, saying that every diffeomorphism
of $S^3$ extends to a diffeomorphism of the $4$-ball ---
a theorem popularly known as $\Gamma_4=0$. That proof is
due to Eliashberg~\cite{elia92}; for an exposition see~\cite{geig08}.

(2) Observe that our argument has shown the following: any
contactomorphism of $(S^1\times S^2,\xist )$ acting trivially on
homology is contact isotopic to a uniquely determined integer power
of~$\rc$; any contactomorphism that is topologically isotopic to the
identity is contact isotopic to an even power of~$\rc$.
\end{rems}
\section{On the topology of the space of contact structures}
\label{section:space}
Gonzalo and the second author have shown in~\cite{gego04}
that there are essential loops in the space of contact structures
on torus bundles over the circle. The main ingredient in that proof was the
classification of contact structures on the $3$-torus.
Bourgeois~\cite{bour06} reproved their result with the help of
contact homology and used that technique to detect higher
non-trivial homotopy groups of the space of contact structures
on a number of higher-dimensional manifolds. Here we formulate
such a statement for the fundamental group of the space of
contact structures on $S^1\times S^2$.

Write $\Xi_0$ for the component of the space of contact structures
on $S^1\times S^2$ containing~$\xist$, and $\Cont_0$
for the subgroup of $\Diff_0:=\Diff_0(S^1\times S^2)$ consisting of
contactomorphisms of~$\xist$. By Gray stability, we have a surjection
\[ \begin{array}{rccc}
\sigma\co & \Diff_0 & \longrightarrow & \Xi_0\\
          & \phi    & \longmapsto     & T\phi (\xist )
\end{array} \]
with $\sigma^{-1}(\xist )=\Cont_0$.
As shown in \cite{gego04}, this gives rise to a long exact sequence
\[... \stackrel{\Delta}{\longrightarrow} \pi_i(\Cont_0)
\stackrel{\iota_{\#}}{\longrightarrow} \pi_i(\Diff_0)
\stackrel{\sigma_{\#}}{\longrightarrow} \pi_i(\Xi_0)
\stackrel{\Delta}{\longrightarrow} \pi_{i-1}(\Cont_0)
\stackrel{\iota_{\#}}{\longrightarrow} ...,\]
where we write $\iota$ for the inclusion $\Cont_0\rightarrow\Diff_0$;
this is essentially the homotopy long exact sequence of a Serre
fibration.

By the second remark at the end of the preceding section,
we have $\pi_0(\Cont_0)\cong\Z$, generated by the contact isotopy class
of~$\rc^2$. Since this lies in the kernel of~$\iota_{\#}$,
there must be a subgroup isomorphic to $\Z$ in $\pi_1(\Xi_0)$.
If we permit ourselves to rely on some additional information about the
homotopy type of $\Diff_0$, we can actually show this to be the full
fundamental group of~$\Xi_0$.

\begin{prop}
The component $\Xi_0$ of the space of contact structures
on $S^1\times S^2$ containing~$\xist$ has fundamental group
isomorphic to~$\Z$.
\end{prop}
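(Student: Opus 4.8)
The plan is to read off the proposition from the long exact sequence above, the two external inputs being the computation $\pi_0(\Cont_0)\cong\Z$ (generated by the contact isotopy class of~$\rc^2$) noted just before the statement, and Hatcher's determination of the homotopy type of~$\Diff(S^1\times S^2)$. The relevant piece of the sequence is
\[ \pi_1(\Cont_0)\stackrel{\iota_{\#}}{\longrightarrow}\pi_1(\Diff_0)
\stackrel{\sigma_{\#}}{\longrightarrow}\pi_1(\Xi_0)
\stackrel{\Delta}{\longrightarrow}\pi_0(\Cont_0)
\stackrel{\iota_{\#}}{\longrightarrow}\pi_0(\Diff_0). \]
Since $\Diff_0$ is path-connected by definition, the last group is trivial, so $\Delta$ maps onto $\pi_0(\Cont_0)\cong\Z$. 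Hence it suffices to show that $\iota_{\#}\co\pi_1(\Cont_0)\to\pi_1(\Diff_0)$ is surjective: by exactness at $\pi_1(\Diff_0)$ this forces $\sigma_{\#}=0$, so $\Delta$ is injective as well, and $\pi_1(\Xi_0)\cong\pi_0(\Cont_0)\cong\Z$.

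To get hold of $\pi_1(\Diff_0)$ I would invoke Hatcher's theorem~\cite{hatc81} that $\Diff(S^1\times S^2)\simeq\OO (2)\times\OO (3)\times\Omega\SO_3$, where the $\OO (2)$- and $\OO (3)$-factors are represented by the diffeomorphisms acting through the $S^1$-coordinate alone, respectively the $S^2$-coordinate alone, and $\Omega\SO_3$ denotes the space of based loops. Restricting to identity components gives $\Diff_0\simeq\SO (2)\times\SO_3\times(\Omega\SO_3)_0$, and since $\pi_1\bigl((\Omega\SO_3)_0\bigr)\cong\pi_2(\SO_3)=0$, we obtain
\[ \pi_1(\Diff_0)\;\cong\;\pi_1(\SO (2))\oplus\pi_1(\SO_3)\;\cong\;\Z\oplus\Z_2, \]
with the two summands carried by the $\SO (2)$- and $\SO_3$-factors.

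It remains to realise generators of these two summands by loops of contactomorphisms of~$\xist$. For the $\Z$-summand I would use the loop $t\mapsto\bigl((\theta,\bfx)\mapsto(\theta+2\pi t,\bfx)\bigr)$, $t\in[0,1]$, of rotations in the $S^1$-direction; for the $\Z_2$-summand the loop $t\mapsto\bigl((\theta,\bfx)\mapsto(\theta,r_{2\pi t}(\bfx))\bigr)$, where $r_{\varphi}$ is the rotation of $S^2$ about the $x_3$-axis. A one-line computation shows that every member of each of these two families leaves the contact form $\alpha=x_3\,d\theta+x_1\,dx_2-x_2\,dx_1$ invariant, so that both are genuine loops in~$\Cont_0$. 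Under the inclusion $\iota$ the first loop represents a generator of $\pi_1(\SO (2))\cong\Z$ and the second the non-trivial element of $\pi_1(\SO_3)\cong\Z_2$; thus $\iota_{\#}$ hits a generating set of $\pi_1(\Diff_0)$ and is surjective, which by the first paragraph completes the proof.

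The one step I expect to require genuine care is matching Hatcher's abstract homotopy equivalence with this concrete picture: one must know that the $\OO (2)$- and $\OO (3)$-factors may be taken to be the evident rotation subgroups of $\Diff(S^1\times S^2)$, so that the inclusions $\SO (2)\hookrightarrow\Diff_0$ and $\SO_3\hookrightarrow\Diff_0$ induce injections onto the two $\pi_1$-summands --- equivalently, that neither of the two rotation loops above is null-homotopic in~$\Diff_0$. This is how these factors enter Hatcher's construction, but it is the point at which the topological input is genuinely used.
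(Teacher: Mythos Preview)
Your proof is correct and follows essentially the same route as the paper: both use the homotopy exact sequence, invoke the homotopy equivalence $\Diff_0\simeq\SO_2\times\SO_3\times\Omega_0\SO_3$ (the paper attributes this to C\'esar de S\'a--Rourke together with Hatcher's proof of the Smale conjecture rather than to Hatcher directly), and then realise generators of $\pi_1(\Diff_0)$ by the same two explicit loops of contactomorphisms to conclude that $\iota_{\#}$ is surjective. Your write-up is slightly more explicit about the exact-sequence bookkeeping and about the one genuine subtlety (that the concrete rotation subgroups really do carry the $\pi_1$-summands under the homotopy equivalence), but the argument is the same.
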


\begin{proof}
The homotopy type of the group of homeomorphisms of $S^1\times S^2$
was determined, modulo the Smale conjecture, by C\'esar de S\'a and
Rourke~\cite{cero79}. Hatcher's proof \cite{hatc83} of the Smale
conjecture not only completes their work, it also implies
--- as shown by Cerf~\cite{cerf59} --- that
the space of diffeomorphisms of any $3$-manifold is homotopy equivalent to
its space of homeomorphisms. Thus,
\[ \Diff_0 (S^1\times S^2)\simeq \SO_2\times\SO_3\times\Omega_0\SO_3,\]
where $\Omega_0\SO_3$ stands for the component of the
contractible loop in the loop space of~$\SO_3$.

Now, $\pi_1(\Omega_0\SO_3)\cong\pi_2(\SO_3)=0$, and the generators
of $\pi_1(\SO_2)$ and $\pi_1(\SO_3)$ in the above factorisation
of $\Diff_0$ can be realised as loops of contactomorphisms
\[ (\theta ,\bfx )\longmapsto (\theta +\varphi ,\bfx ),\;\;
\varphi\in [0,2\pi ], \]
and
\[ (\theta ,\bfx )\longmapsto (\theta ,r_{\varphi}(\bfx )),\;\;
\varphi\in [0,2\pi ], \]
respectively. Thus, the homotopy exact sequence becomes
\[ \pi_1(\Cont_0)\twoheadrightarrow\pi_1(\Diff_0)\rightarrow\pi_1(\Xi_0)
\rightarrow\Z\rightarrow 0.\]
The proposition follows.
\end{proof}
\section{Legendrian knots not distinguished by classical invariants}
\label{section:knots}
In \cite{fras96}, Fraser described an infinite family of Legendrian
knots in the contact manifold
\[ (M_0,\xi_0):=(S^1\times S^2\# S^1\times S^2, \xist\#\xist ), \]
all of which have the same topological knot type and the
same classical invariants $\tb$ and $\rot$, but which are nonetheless
pairwise not Legendrian isotopic. The idea for
distinguishing these knots is to perform Legendrian surgery
on them (or contact $(-1)$-surgery in the language of~\cite{dige04}),
and then to observe that the contact structures on the
surgered manifold (which happens to be the $3$-torus~$T^3$) are
pairwise not isotopic. This argument, in our view, is incomplete because
it hinges on the statement ``Legendrian surgery on Legendrian isotopic
knots produces isotopic contact structures on the surgered manifold'' ---
which is meaningless, as we want to explain.

Suppose you have two Legendrian isotopic knots $L_0$, $L_1$ in a
contact $3$-manifold $(M,\xi )$. The Legendrian isotopy extends to
a contact isotopy $\phi_t$, $t\in [0,1]$, of $(M,\xi )$ with
$\phi_1(L_0)=L_1$. For each $t\in [0,1]$, the contactomorphism
$\phi_t$ of $(M,\xi )$ induces a contactomorphism between the
contact manifold $M_{L_0}$ obtained by  Legendrian surgery
along $L_0$ and the contact manifold $M_{\phi_t(L_0)}$ obtained by
Legendrian surgery along $\phi_t(L_0)$. But there is no way, in general,
to identify $M_{L_0}$ with $M_{\phi_t(L_0)}$ (even as mere differential
manifolds) other than with the diffeomorphism induced by~$\phi_t$.
So we obtain a parametric family of contact manifolds, all of which
are contactomorphic, but not an isotopy of contact structures
on a fixed differential manifold.

In fact, in situations where there is a canonical way of
identifying the surgered manifolds, the statement in question
is false, in general. This is illustrated by the following example
from~\cite[Exercise 11.3.12~(c)]{ozst04}, see
Figure~\ref{figure:twosharks}. Contact $(-1)$-surgery
on the `shark' in $(S^3,\xist )$ with its mouth on the left or on the right
corresponds topologically to a surgery on the unknot with
surgery coefficient $-3$ relative to the surface framing. If we take
the obvious topological identification of the shark with the unknot,
this allows us to identify the surgered manifold in both cases
with the lens space $L(3,1)$. With respect to this identification,
the two resulting contact structures on the surgered manifold
$L(3,1)$ can be distinguished via their induced spin$^c$ structure,
so they are not isotopic.
(Under the identification in question,
which gives the two sharks the same, say the counter-clockwise orientation,
the shark on the left has $\rot =+1$, the one on the right, $\rot =-1$.
This implies that the corresponding spin$^c$ structures have
first Chern class $c_1=\pm1\in H^2(L(3,1);\Z )=\Z_3$;
see~\cite[Prop.~2.3]{gomp98}.)
However, there is a Legendrian isotopy from
one shark to the other (reversing its orientation), and this
induces a contactomorphism of the surgered contact manifolds --- it
is simply the contactomorphism induced by the contactomorphism
$(x,y,z)\mapsto (-x,-y,z)$
relating the two contact surgery diagrams.

\begin{figure}[h]
\labellist
\small\hair 2pt
\pinlabel $-1$ [bl] at 169 125
\pinlabel $-1$ [br] at 508 125
\endlabellist
\centering
\includegraphics[scale=0.3]{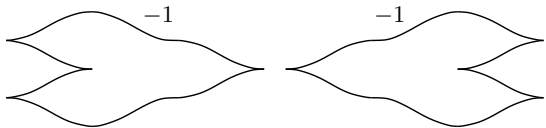}
  \caption{Contactomorphic, non-isotopic contact structures.}
  \label{figure:twosharks}
\end{figure}

Thus, if one wants to show with the help of Legendrian surgery that two
Legendrian knots $L_0,L_1$ cannot be Legendrian isotopic, one has to
require, in general, that the surgered contact manifolds are not
contactomorphic. In Fraser's set-up, unfortunately, the surgered
manifolds happen to be contactomorphic by construction.
Nonetheless, we now want to show that Fraser's idea can be made to work.
In fact, the examples we are going to discuss presently are
explicit realisations of the knots described only implicitly by Fraser.

Figure \ref{figure:fraser} shows a family $L_k$, $k\in\Z$,
of Legendrian knots in $(M_0,\xi_0)$;
for $k<0$, the zig-zags are to be interpreted as $|k|$ pairs of zig-zags
in the opposite direction. Observe that $L_k=\rc^k(L_0)$, where
$\rc$ is regarded as a contactomorphism acting only on the
upper (in the picture) summand $S^1\times S^2$ --- there is a realisation
of $\rc$ that fixes a disc and hence is compatible with
taking the connected sum. All these knots have the
same topological knot type, as can be shown by applying the topological
light bulb trick. Moreover, the well-known formul\ae\ for computing
the classical invariants --- which take the same form for a Legendrian knot
in `standard form' in $(M_0,\xi_0)$ as in $(S^3,\xist )$,
see~\cite{gomp98} --- give $\tb (L_k)=1$ and $\rot (L_k)=0$ for all
$k\in\Z$ (for either orientation of those knots).

\begin{figure}[h]
\labellist
\small\hair 2pt
\pinlabel $\overbrace{\hphantom{xxx}}^k$ [b] at 168 425
\endlabellist
\centering
\includegraphics[scale=0.38]{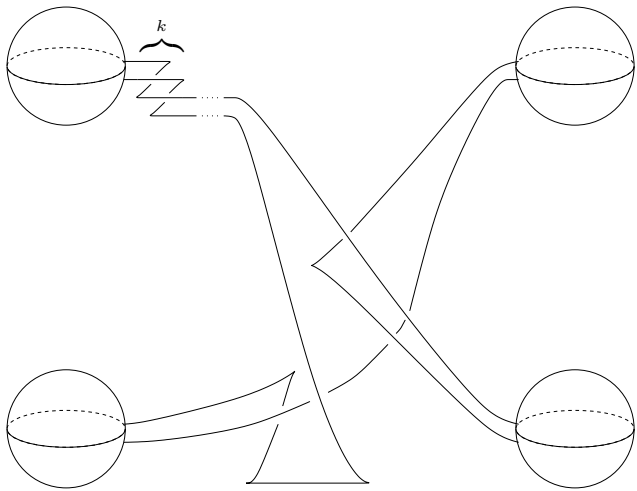}
  \caption{The Legendrian knots $L_k$.}
  \label{figure:fraser}
\end{figure}

\begin{thm}
For $k\neq k'$, the knots $L_k$ and $L_{k'}$ are not Legendrian isotopic.
\end{thm}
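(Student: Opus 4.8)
The plan is to follow Fraser's strategy but to replace the flawed ``isotopic contact structures'' argument with an argument about contactomorphism \emph{classes} rel a marked knot, or --- what turns out to be cleaner --- to reduce everything to the classification of contactomorphisms of $(S^1\times S^2,\xist )$ established in Remark~(2) at the end of Section~\ref{section:reduction}. First I would make precise the relation $L_k=\rc^k(L_0)$: the picture shows that $\rc$, realised so as to fix a disc (hence compatible with the connected sum), carries $L_0$ to $L_1$ up to Legendrian isotopy, essentially by the computation recorded in Section~\ref{section:Legcircle} that $\rc(K_0)\simeq S_+K_0$ together with ``move~6''. Iterating gives $L_k\simeq\rc^k(L_0)$ for all $k\in\Z$, with the $\rc$ acting only on the upper summand. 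Consequently, if $L_k$ and $L_{k'}$ were Legendrian isotopic, then $\rc^{k-k'}$ would carry $L_0$ to a Legendrian knot that is Legendrian isotopic to~$L_0$; writing $m=k-k'$, it suffices to show that for $m\neq 0$ the knot $\rc^m(L_0)$ is \emph{not} Legendrian isotopic to~$L_0$.

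**The key step: performing Legendrian surgery and using rotation numbers.**

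To see this, perform contact $(-1)$-surgery (Legendrian surgery) on $L_0$. By Fraser's computation, the surgered manifold is $(T^3,\xi)$ for some specific contact structure, and the surgered manifolds along $L_0$ and along $\rc^m(L_0)$ are tautologically contactomorphic, via the contactomorphism induced by $\rc^m$ itself --- this is the point the example of Section~\ref{section:knots} warns us about. So instead of contact structures I would work with a contact-isotopy invariant of the \emph{pair} (surgered manifold, a distinguished Legendrian curve), or better, directly with the rotation number as a relative invariant. The cleanest route: choose a global trivialisation of $\xist\#\xist$ on $M_0$ --- one exists because $\xist$ is trivial as a $2$-plane bundle on $S^1\times S^2$ (the explicit section is given in Section~\ref{section:Legcircle}), and these patch across the connected sum --- so that $\rot$ is well defined for all Legendrian knots in $(M_0,\xi_0)$. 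The point is then that a Legendrian isotopy preserves $\rot$ \emph{computed in a fixed trivialisation}, whereas applying $\rc^m$ (which acts only on the upper summand) changes the homotopy class of the trivialisation over that summand: by the Remark in Section~\ref{section:Legcircle}, $\rc$ increases the rotation number of $K_0$ by~$1$, and the same local computation shows $\rc^m$ shifts the rotation number of any Legendrian curve running once over the relevant handle by~$m$. If $L_0$ carries such a curve (it does, by construction, in the upper summand), then $\rot(\rc^m(L_0))$ and $\rot(L_0)$, computed in the \emph{same} fixed trivialisation of $\xi_0$, differ by a nonzero amount, contradicting Legendrian isotopy invariance of $\rot$.

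**Assembling the contradiction.**

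More precisely I would argue as follows. Suppose $L_k\simeq L_{k'}$ with $k\neq k'$, extend the Legendrian isotopy to a contact isotopy $\phi_t$ of $(M_0,\xi_0)$ with $\phi_1(L_0)\simeq\rc^{k'-k}(L_0)$; composing, $g:=\phi_1^{-1}\circ\rc^{k'-k}$ is a contactomorphism of $(M_0,\xi_0)$ fixing $L_0$ up to Legendrian isotopy. Now restrict attention to the upper summand: since $\rc$ fixes a disc, $\rc^{k'-k}$ is supported in the upper $S^1\times S^2$, and there it acts as a power of the $\rc$ of Section~\ref{section:contacto}; by Remark~(2) of Section~\ref{section:reduction} this contactomorphism is contact isotopic to the identity of $(S^1\times S^2,\xist)$ \emph{only if} $k'-k$ is such that the power is trivial --- but what we actually need is sharper: we need that it changes $\rot$. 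Using the fixed global section of $\xi_0$, compute $\rot(\phi_1(L_0))=\rot(L_0)$ (contact-isotopy invariance), while on the other hand $\rot(\rc^{k'-k}(L_0))=\rot(L_0)+(k'-k)\neq\rot(L_0)$ by the rotation-number shift. This is the desired contradiction, so $L_k\not\simeq L_{k'}$ for $k\neq k'$.

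**Where the difficulty lies.**

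The main obstacle is not any single hard theorem but rather pinning down precisely the claim ``$\rc^m$ shifts the rotation number of $L_0$ by~$m$ relative to a fixed trivialisation.'' One must verify that $L_0$ does carry a portion running once over the handle of the upper summand in the way that makes the rotation-number computation of the Section~\ref{section:Legcircle} Remark applicable, and one must check that the global section of $\xi_0=\xist\#\xist$ restricts, over the upper summand minus the connect-sum ball, to the section used in that Remark (up to homotopy). Equivalently, one is computing the difference class in $H^1$ of the two trivialisations $\tau$ and $(\rc^m)^*\tau$ over the upper summand and pairing it with $[L_0]$; this is a Chern-class/obstruction-theory bookkeeping exercise, entirely parallel to the $S^1\times S^2$ case already treated in the Remark of Section~\ref{section:Legcircle}, but it does need to be spelled out carefully so that the connected-sum geometry does not introduce spurious corrections. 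Once that is done, the invariance of $\rot$ under Legendrian (equivalently contact) isotopy, which is standard, closes the argument.
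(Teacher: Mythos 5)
Your key step is false, and it fails for the very reason the theorem is interesting. You claim that $\rc^m$ shifts the rotation number of $L_0$ by $m$ ``relative to a fixed trivialisation'', because $L_0$ ``carries a curve running once over the handle in the upper summand''. But $L_0$ is homologically trivial in $M_0$: it runs over the upper $1$-handle twice with opposite signs (algebraic count zero), since Legendrian surgery on it must produce $T^2\times D^2$, whose $2$-handle is attached along a commutator of the two $1$-handles. The rotation-number shift caused by $\rc$ is proportional to the algebraic number of passes over the handle, so $\rc^m$ adds $m$ positive kinks to one strand and $m$ negative kinks to the other, leaving $\rot$ (and $\tb$) unchanged. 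Indeed $\rot(L_k)=0$ and $\tb(L_k)=1$ for \emph{all} $k$ --- that is exactly the statement that the classical invariants do not distinguish these knots, so no bookkeeping with trivialisations can rescue an argument that tries to distinguish them by $\rot(L_0)$ versus $\rot(\rc^m(L_0))$; moreover, for a null-homologous knot $\rot$ does not even depend on the choice of trivialisation. Your concluding ``contradiction'' $\rot(\rc^{k'-k}(L_0))=\rot(L_0)+(k'-k)$ would say $\rot(L_{k'-k})\neq\rot(L_0)$, contradicting the computation recorded in the paper (and the premise of the example).

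The genuinely needed work, which your proposal skips, is to convert the hypothesis ``$\rc^k$ is contact isotopic to a contactomorphism fixing $L_0$'' into the statement that $\rc^k$ is contact isotopic to a contactomorphism fixing an \emph{auxiliary} Legendrian curve $K_1$ that passes algebraically once over the upper $1$-handle --- only for such a curve does the rotation-number shift by $k$ give a contradiction. The paper does this by first upgrading to a contactomorphism fixing a whole neighbourhood $N(L_0)$ (Lemma~\ref{lem:nbhd}), performing contact $(-1)$-surgery on $L_0$ inside $N(L_0)$ to obtain $(T^3,\eta_1)$, identifying the belt sphere with the $\theta$-fibre $S^1_{\theta}$ up to Legendrian isotopy (via the Stein-filling characterisation of the $\theta$-fibres, Ghiggini's classification of linear Legendrian curves, and a $\tb$-computation for the Legendrian meridian), then invoking Giroux's theorem to replace the induced contactomorphism of $T^3\setminus N(S^1_{\theta})$ by the lift of a surface diffeomorphism, and finally using Baer's theorem to isotope the image of $K_1$ back to $K_1$. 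None of this machinery is optional window-dressing: it is what produces a curve to which the rotation-number argument actually applies.
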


\begin{proof}
Arguing by contradiction, let us assume that there are two
Legendrian isotopic knots $L_{k_1}$ and $L_{k_2}$, with $k_1\neq k_2$.
Then $L_0=\rc^{-k_1}(L_{k_1})$ and $L_k=\rc^{-k_1}(L_{k_2})$, where
$k:=k_2-k_1$, will be Legendrian isotopic. Since $L_k=\rc^k(L_0)$,
this implies that $\rc^k$ is contact isotopic to a contactomorphism
of $(M_0,\xi_0)$ that fixes~$L_0$. By Lemma~\ref{lem:nbhd}, $\rc^k$
is then contact isotopic to a contactomorphism $\phi$ that fixes a
neighbourhood $N(L_0)$ of~$L_0$.

The Stein fillable and hence tight contact manifold obtained by
contact $(-1)$-surgery on $L_0$
is~$T^3$, see~\cite[Example~11.2.4]{gost99}, with
its standard contact structure
$\eta_1:=\ker (\sin\theta\, dx-\cos\theta\, dy)$, cf.~\cite{stip02}.
Interpreted as a Kirby diagram, Figure~\ref{figure:fraser}
(with $k=0$ and framing for the handle attachment equal to $-1$
relative to the contact framing) describes $T^2\times D^2$.
The $D^2$-fibre is represented by the cocore of the
$2$-handle, so the belt sphere of the surgery on $L_0$
is an $S^1$-fibre of~$T^3$, see~\cite[Example~4.6.5]{gost99}.

This $S^1$-fibre corresponds (up to isotopy) to the $\theta$-coordinate in the
description of~$\eta_1$, for the $\theta$-circles are uniquely
characterised by the fact that they become homotopically
trivial in any Stein filling $W$ of~$T^3$, see the proof
of~\cite[Lemma~4.3]{stip02}. That fact rests on two observations.
First of all, the homomorphism $H_1(T^3)\rightarrow H_1(W)$
induced by inclusion is
surjective; this follows from the cell structure of Stein manifolds.
Secondly, the $\theta$-fibres must lie in the kernel of this
homomorphism, otherwise one could pass to a cover and obtain
a Stein filling of the contact structure $\eta_n:=
\ker(\sin(n\theta )\, dx-\cos(n\theta )\, dy)$ for some~$n>1$,
which is impossible by a result of Eliashberg~\cite{elia96}.

In the proof of the cancellation lemma given in \cite[p.~323]{geig08}
it is shown explicitly that the belt sphere of the surgery is Legendrian
isotopic, in the surgered manifold, to a Legendrian push-off
of~$L_0$. Alternatively, we can isotope it to a standard Legendrian
meridian of~$L_0$, as shown in \cite[Prop.~2]{dige09}.
We now want to show that this standard Legendrian meridian is
in fact {\em Legendrian} isotopic to the $\theta$-fibre $S^1_{\theta}$
in $(T^3,\eta_1)$. For this we appeal to the classification
of linear Legendrian curves in $(T^3,\eta_1)$ by Ghiggini~\cite{ghig06}.
The Thurston--Bennequin invariant $\tb (L)$ can be defined for such
linear Legendrian curves $L$ as the twisting of the contact structure
relative to an incompressible torus containing~$L$. This means that
$\tb (S^1_{\theta})=-1$, since the contact structure
$\eta_1$ makes one negative twist along a $\theta$-fibre relative
to the framing given by the product structure $T^3=S^1_{\theta}\times
T^2_{x,y}=(\R /2\pi\Z)\times (\R /\Z )^2$
(and the orientation $dy\wedge dx\wedge d\theta$ induced by~$\eta_1$).
This is the maximal $\tb$ in the topological knot type of $S^1_{\theta}$;
see~\cite[Thm.~5.4]{ghig06}. The definition of the rotation number $\rot$
for linear Legendrian curves in $(T^3,\eta_1)$ depends on the choice
of trivialisation of~$\eta_1$, but it can be normalised so that
$\rot =0$ for curves realising the maximal~$\tb$. According
to \cite[Thm.~2.5]{ghig06}, the classical invariants suffice to
classify Legendrian realisations of the topological knot type
of $S^1_{\theta}$. So all we have to show is that the
standard Legendrian meridian $\mu_0$ of $L_0$ has $\tb (\mu_0) =-1$ in 
the surgered manifold, i.e.\ in $(T^3,\eta_1)$.

Now, $\mu_0$ bounds a disc in $S^3$. With respect to the framing
given by that disc in~$S^3$, the contact structure makes one negative twist
along~$\mu_0$ (since $\mu_0$ is a standard Legendrian unknot
in $S^3$ with $\tb (\mu_0) =-1$). A close inspection of
\cite[Example~4.6.5]{gost99} shows that the framing which $\mu_0$
inherits from the meridional disc, now regarded as a framing
of $\mu_0$ in~$T^3$, is the same it inherits from an
incompressible torus, whence it follows that $\tb (\mu_0)=-1$
also in the surgered manifold $(T^3,\eta_1)$. (For that last statement
about framings, imagine $S^3$ being cut in a plane passing through the
attaching balls of one of the $1$-handles, and with the attaching
balls for the second $1$-handle symmetric to this plane. Then, in
the $2$-sphere cut out by this
plane, a circle around one of the attaching balls --- that ball
being seen as a disc in this $2$-sphere ---
defines~$\mu_0$ up to isotopy. The $2$-sphere with two discs
removed, together with a cylinder contained in the boundary
of the $1$-handle, is an incompressible torus
in the surgered manifold, containing~$\mu_0$.)

As a check for consistency, we observe that because of
$\tb (S^1_{\theta})=-1$ in $(T^3,\eta_1)$,
contact $(+1)$-surgery along such a fibre,
which brings us back to $(M_0,\xi_0)$ by the cancellation lemma,
is topologically a surgery with framing given by the
product structure of~$T^3$, and that does indeed produce~$M_0$.

If we perform the surgery along $L_0$ inside the neighbourhood
$N(L_0)$, the fact that the belt sphere of the surgery
is $S^1_{\theta}$ (up to Legendrian isotopy) implies that we
have a contactomorphism between $(M_0\setminus N(L_0),\xi_0)$
and $(T^3\setminus N(S^1_{\theta}),\eta_1)$ for some neighbourhood
$N(S^1_{\theta})$ of~$S^1_{\theta}$.
It follows that the contactomorphism $\phi$ of $(M_0,\xi_0)$,
which fixes $N(L_0)$,
induces a contactomorphism of $(T^3,\eta_1)$ that fixes
$N(S^1_{\theta})$. This may be interpreted as a
contactomorphism of the space of contact elements of $T^2$ with a disc $D^2$
removed, equal to the identity near the boundary. By Giroux's
theorem~\cite{giro01}, this contactomorphism
is contact isotopic (rel boundary) to one that is lifted from
a diffeomorphism of the base $T^2\setminus D^2$.
(Recall that the differential of a diffeomorphism of any given manifold
induces a contactomorphism of the space of contact elements of that
manifold.)
We continue to write $\phi$ for this contactomorphism and its
extension to $(M_0,\xi_0)$.

Using the action of the diffeotopy group of $T^2\setminus D^2$
by contactomorphisms on $(T^3\setminus N(S^1_{\theta}),\eta_1)$,
we may assume that the identification of
$(T^3\setminus N(S^1_{\theta}),\eta_1)$ with $(M_0\setminus N(L_0),\xi_0)$
has been chosen in such a way that one of the standard generators
of $H_1(T^2\setminus D^2)$ corresponds to a loop in $M_0\setminus N(L_0)$
going once (homologically, or geometrically counted with sign)
over the upper $1$-handle in Figure~\ref{figure:fraser}.

A concrete Legendrian realisation $K_1$ of such a loop
is shown in Figure~\ref{figure:3torus},
where $S^1_{\theta}$ is taken to be the fibre over $(x,y)=(1/2,1/2)$.
We take $\oK_1=\{ y=y_0,\,\theta =0\}$ (oriented by $\partial_x$,
cooriented by~$-\partial_y$); its Legendrian lift $K_1$ coincides
with~$\oK_1$.
Transverse to $K_1$ we see an annulus $\{ x=1/2\}$ in $T^3\setminus
N(S^1_{\theta})$. Each of the two boundary
components of that annulus bounds a disc in~$M_0$, so there we have a
$2$-sphere transverse to~$K_1$, corresponding to the $S^2$-factor in the
upper summand $S^1\times S^2$. We also write $K_1$ for the corresponding
Legendrian loop in $(M_0\setminus N(L_0),\xi_0)$.

\begin{figure}[h]
\labellist
\small\hair 2pt
\pinlabel $\oK_1=K_1$ [b] at 126 41
\pinlabel $x=1$ [t] at 362 3
\pinlabel $\theta=2\pi$ [r] at 2 291
\pinlabel $y=1$ [br] at 145 147
\pinlabel $(0,0,0)$ [tr] at 2 3
\endlabellist
\centering
\includegraphics[scale=0.4]{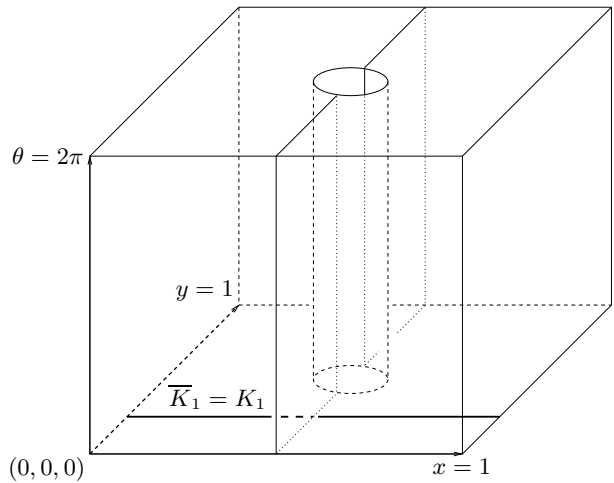}
  \caption{$T^3\setminus N(S^1_{\theta})$.}
  \label{figure:3torus}
\end{figure}

The contactomorphism $\phi$ of $(M_0,\xi_0)$, being isotopic
to~$\rc^k$, sends $K_1$ to a Legendrian knot $\phi (K_1)\subset
M_0\setminus N(L_0)$ that is
smoothly homotopic in $M_0$ to~$K_1$. This translates into
a homotopy in $M_0\setminus N(L_0)$ at the price of adding
a meridional loop every time the original homotopy crosses~$L_0$.
In $T^3\setminus N(S^1_{\theta})$ this becomes a homotopy
between $K_1$ and $\phi (K_1)$, modulo adding a $\theta$-fibre
for each of the meridional crossings. When projected to
$T^2\setminus D^2$, this defines a homotopy between $\oK_1$ and
the projection $\oK_1'$ of $\phi (K_1)$.

If we write $\ophi$ for the diffeomorphism of $T^2\setminus D^2$
whose lift is $\phi$, then $\oK_1'=\ophi (\oK_1)$, so $\oK_1'$ is
a simple closed curve in $T^2\setminus D^2$ homotopic to~$\oK_1$.
By Baer's theorem, see~\cite[6.2.5]{stil93}, there is an isotopy of
$T^2\setminus D^2$, identical near the boundary, that moves $\oK_1'$ back
to~$\oK_1$ (with the original orientation, for homological reasons).
The lift of this isotopy is a contact isotopy
of $(T^3\setminus N(S^1_{\theta}),\eta_1)$, fixed near the boundary, that
moves $\phi (K_1)$ back to~$K_1$.

Thus, $\rc^k$ is contact isotopic to a contactomorphism of $(M_0,\xi_0)$
that fixes $K_1$, which means that $\rc^k$ sends $K_1$ to a Legendrian
isotopic copy of~$K_1$, contradicting the fact that $\rc^k$
changes the rotation number of any oriented Legendrian circle that passes
once (in positive direction, passings counted with sign) over the upper
$1$-handle by~$k$.
\end{proof}

\begin{rem}
Prior to Fraser's work, no examples were known of Legendrian knots
that could not be distinguished by the classical
invariants. The first examples of this type in $(\R^3,\xist )$
were found by Chekanov~\cite{chek02},
who used Legendrian contact homology to distinguish
the knots. Various other non-classical invariants have been
developed in the meantime, such as normal rulings~\cite{chpu05,fuch03}
or knot Floer homology invariants~\cite{loss,ost08}.
\end{rem}

\begin{ack}
We thank Yasha Eliashberg and Andr\'as Stipsicz for useful conversations,
and Bijan Sahamie for his critical reading of an earlier
manuscript and several constructive comments.

F.~D.\ is partially supported by
grant no.\ 10631060 of the National Natural Science Foundation
of China.

The final writing was done while H.~G.\ was a guest of Peking University,
Beijing.
\end{ack}

\end{document}